\documentclass[11pt]{amsart}
\usepackage{amssymb,latexsym,amsmath,tikz-cd}
\usepackage{enumitem}
\usepackage[mathscr]{eucal}
\usepackage{bbm,xcolor,comment}
\usepackage{mathdots}
\usepackage{chngcntr}


\numberwithin{equation}{section}

\setlength{\evensidemargin}{0.25in}
\setlength{\oddsidemargin}{0.25in}
\setlength{\textwidth}{6in}
\setlength{\parindent}{20pt}
\linespread{1.2} 



\def\Kahler{{K\"ahler}}

\def\cf{cf.~}


\newcommand{\hsp}[1]{{\hbox{\hspace{#1}}}}

\newcommand{\mystack}[2]{\ensuremath{ \substack{ \hbox{\tiny{${#1}$}} \\ \hbox{\tiny{${#2}$}} }} }


\newcounter{letcnt1} 

\newcounter{letcnt2} 
\counterwithin{letcnt2}{letcnt1}


\newcounter{talkcnt} 

\def\a{\alpha}  
\def\b{\beta}  
\def\d{\delta}  
\def\e{\varepsilon}

\def\w{\omega}

\def\GsD{\Gamma\backslash D}
\def\cA{\mathcal{A}}

 \def\tad{\mathrm{ad}}
 
\def\tAut{\mathrm{Aut}}

\def\fb{\mathfrak{b}} 
 
\def\bC{\mathbb C}

\def\td{\mathrm{d}} 
\def\tdeg{\mathrm{deg}} \def\tdet{\mathrm{det}}
 
 \def\tdim{\mathrm{dim}}
\def\cE{\mathcal E}

\def\tEnd{\mathrm{End}}

\def\cF{\mathcal F} 

\def\ff{\mathfrak{f}}

\def\cG{\mathcal G} 

\def\tGr{\mathrm{Gr}}
\def\fg{{\mathfrak{g}}}

\def\cH{\mathcal H}

\def\bi{\mathbf{i}}

\def\fk{\mathfrak{k}}

 \def\sL{\mathscr{L}}

 \def\cN{\mathcal N}

 \def\cO{\mathcal O}
 
\def\sO{\mathscr{O}}

\def\bP{\mathbb P} 
 
 \def\olP{{\overline P}}
\def\fp{\mathfrak{p}}

\def\bQ{\mathbb Q}

\def\bR{\mathbb R} 
 \def\sR{\mathscr{R}}

\def\sS{\mathscr{S}}

\def\bs{\mathbf{s}}

\def\ft{\mathfrak{t}}

\def\fu{\mathfrak{u}}

 \def\cV{\mathcal V}

\def\fv{\mathfrak{v}}

\def\cW{\mathcal W}

   \def\bZ{\mathbb Z}



\def\tand{\quad\hbox{and}\quad}
\def\bs{\backslash}

\def\tinyb{{\hbox{\tiny{$\bullet$}}}}
\def\smallb{{\hbox{\small{$\bullet$}}}}

\def\inj{\hookrightarrow}
\def\sur{\twoheadrightarrow}

\def\op{\oplus}
\def\ot{\otimes}

\def\tw{\hbox{\small $\bigwedge$}}


\newenvironment{sblist}{ 
  \begin{list}{$\smallb$}
   {\usecounter{cnt} \setlength{\itemsep}{2pt}
    \setlength{\leftmargin}{20pt} \setlength{\labelwidth}{20pt}
    \setlength{\listparindent}{20pt} }
   }
   {\end{list}}

\newenvironment{a_list}
  {\begin{enumerate}[label=(\alph*),itemsep=3pt,leftmargin=25pt,listparindent=20pt]}
  {\end{enumerate}}
\newenvironment{a.list}
  {\begin{enumerate}[label=\alph*.,itemsep=3pt,leftmargin=25pt,listparindent=20pt]}
  {\end{enumerate}}

\newenvironment{num.list}
  {
  \begin{enumerate}[itemsep=3pt,leftmargin=25pt,listparindent=20pt,label={\arabic*.}]
  }
  {\end{enumerate}}
\newenvironment{i_list}
  {\begin{enumerate}[label=(\roman*),itemsep=3pt,leftmargin=25pt,listparindent=20pt]}
  {\end{enumerate}}
\newenvironment{i_list_emph}
  {\begin{enumerate}[label=\emph{(\roman*)},itemsep=3pt,leftmargin=25pt,listparindent=20pt]}
  {\end{enumerate}}


\newtheorem{corollary}[equation]{Corollary}
\newtheorem{lemma}[equation]{Lemma}
\newtheorem{proposition}[equation]{Proposition}
\newtheorem{theorem}[equation]{Theorem}

\newtheorem{conjecture}[equation]{Conjecture}
\newtheorem*{theorem*}{Theorem}

\theoremstyle{definition}

\newtheorem*{boldQ*}{Question}
\newtheorem*{boldP*}{Problem}

\theoremstyle{definition}

\theoremstyle{remark}
\newtheorem*{assume*}{Assume}
\newtheorem*{answer*}{Answer}

\newtheorem*{claim*}{Claim}

\newtheorem{definition}[equation]{Definition}
\newtheorem*{definition*}{Definition}
\newtheorem{example}[equation]{Example}
\newtheorem*{example*}{Example}

\newtheorem*{hint*}{Hint}
\newtheorem*{notation*}{Notation}
\newtheorem{remark}[equation]{Remark}
\newtheorem*{remark*}{Remark}
\newtheorem*{remarks*}{Remarks}
\newtheorem*{fact*}{Fact}

\newtheorem*{emphQ*}{Question}
\newtheorem*{emphA*}{Answer}




\def\fibre{A}
\def\fibrezero{{\fibre^0}}
\def\fibreone{{\fibre^1}}

\def\olB{{\overline{B}}}

\def\bfE{\mathbf{E}} 
\def\cEe{\cE_\mathrm{e}}

\def\bfF{\mathbf{F}}
\def\cFe{\cF_\mathrm{e}}
\def\bfH{\mathbf{H}}

\def\bfI{\mathbf{I}}
\def\bfL{\mathbf{L}}

\def\Le{\Lambda_\mathrm{e}}

\def\olO{\overline{\sO}}

\def\olOone{{\overline \sO{}^1}}
\def\olP{{\overline{\wp}}}
\def\olPzero{\olP{}^0}

\def\olPone{\olP{}^1}

\def\tPhi{\widetilde\Phi}
\def\PhiT{\Phi^\mathsf{T}}

\def\Phizero{\Phi^0} 
\def\Phione{\Phi^1}

\def\tsum{{\textstyle{\sum}}}

\def\bfU{\mathbf{U}}
\def\bfV{\mathbf{V}}
\def\cVe{\cV_\mathrm{e}}

\def\Zuo{MR1803724}

\begin{document}
\title[Natural bundles on completions]{Natural line bundles on completions of period mappings}

\author[Green]{Mark Green}
\email{mlg@math.ucla.edu}
\address{UCLA Mathematics Department, Box 951555, Los Angeles, CA 90095-1555}

\author[Griffiths]{Phillip Griffiths}
\email{pg@math.ias.edu}
\address{Institute for Advanced Study, 1 Einstein Drive, Princeton, NJ 08540}
\address{University of Miami, Department of Mathematics, 1365 Memorial Drive, Ungar 515, Coral Gables, FL  33146}

\author[Robles]{Colleen Robles}
\email{robles@math.duke.edu}
\address{Mathematics Department, Duke University, Box 90320, Durham, NC  27708-0320} 
\thanks{Robles is partially supported by NSF DMS 1611939, 1906352.}

\date{\today}

\begin{abstract}
We give conditions under which natural lines bundles associated with completions of period mappings are semi-ample and ample.
\end{abstract}
\keywords{period map, variation of (mixed) Hodge structure}
\subjclass[2010]
{
 14D07, 32G20, 
 32S35, 
 58A14. 
}
\maketitle
\setcounter{tocdepth}{1}
\let\oldtocsection=\tocsection
\let\oldtocsubsection=\tocsubsection
\let\oldtocsubsubsection=\tocsubsubsection
\renewcommand{\tocsection}[2]{\hspace{0em}\oldtocsection{#1}{#2}}
\renewcommand{\tocsubsection}[2]{\hspace{3em}\oldtocsubsection{#1}{#2}}

\section{Introduction} \label{S:intro}

We consider pairs $(\olB,Z)$ consisting of a smooth projective variety $\olB$ and a reduced normal crossing divisor $Z \subset \olB$, and suppose that the the complement
\[
  B \ = \ \olB\bs Z
\]
has a variation of (pure) polarized Hodge structure
\begin{equation}\label{iE:vhs}
\begin{tikzcd}[row sep=small,column sep=tiny]
  \cF^p \arrow[r,phantom,"\subset"] 
  & \cV \arrow[r,equal] \arrow[d] 
  & \tilde B \times_{\pi_1(B)} V 
  \\ & B\,. &
\end{tikzcd}
\end{equation}
We assume that the Hodge structures are effective and of weight $n \ge 1$; so that $\cF^p = 0$ for all $p > n$, and $\cF^p = \cV$ for all $p \le 0$.

Let $\cFe^p \subset \cVe$ denote Deligne's extension of the Hodge vector bundles \eqref{iE:vhs} to $\olB$.  Natural line bundles over $\olB$ include the determinants $\tdet(\cFe^p)$, and the  (\emph{extended, augmented}) \emph{Hodge line bundle}
\begin{equation}\label{E:Le}
   \Le \ = \ 
   \tdet(\cFe^n) \ot \tdet(\cFe^{n-1}) \ot \cdots \ot 
  \tdet(\cFe^{\lceil (n+1)/2 \rceil})  \,.
\end{equation}
We let 
\[
   \Lambda \ = \ \left.\Le\right|_B \ = \ 
   \tdet(\cF^n) \ot \tdet(\cF^{n-1}) \ot \cdots \ot 
  \tdet(\cF^{\lceil (n+1)/2 \rceil})
\]
denote the (\emph{augmented}) \emph{Hodge line bundle} over $B$.
\[
   \Lambda \ = \
   \tdet(\cF^n) \ot \tdet(\cF^{n-1}) \ot \cdots \ot 
  \tdet(\cF^{\lceil (n+1)/2 \rceil}) \,.
\]
In addition to the Hodge line bundle $\Le$, natural line bundles for the pair $(\olB,Z)$ include: the normal bundles $[Z_i] = \cN_{Z_i/\olB}$, with $Z_i$ the irreducible components of $Z$, and the log canonical bundle $K_{\olB}+[Z]$. 

Given a line bundle $L \to X$ over a compact complex manifold we consider two groups of properties:
\begin{i_list}
\item Numerical: $L$ is nef; $L$ is big.
\item Geometric: $L$ is semi-ample; $L$ is ample.
\end{i_list}

\begin{example}[Numerical properties]
Quite a bit is known about numerical properties of natural line bundles for the triple $(\olB,Z;\Phi)$:  (a) The Hodge line bundle $\Lambda \to B$ is nef \cite[Proposition 7.15]{MR0282990}.\footnote{See \cite{MR4023377} for nef-ness in characteristic $p$.}  And from the nefness of $\Lambda$ and \cite[Theorem 1.4.1]{GGLR} one may deduce that the extension $\Le\to\olB$ is nef as well. 

(b) Both $\Lambda$ and $\Le$ are big if and only if $\Phi$ satisfies generic local Torelli (\S\ref{S:loctor0}).  

(c) Likewise, assuming generic local Torelli, 
the log canonical bundle $K_\olB + [Z]$ is big \cite{\Zuo}.\footnote{There are also a number of results on the hyperbolicity of $B$, including \cite{BroBru20, Brun20, DLSZ19}.}  
\end{example}

\noindent The geometric properties of semi-ampleness and ampleness are more subtle, and it is these that we are predominately interested in here.  (Though we will identify conditions under which the log canonical bundle is nef.)

\begin{remark}\label{R:BBT}
The semi-ampleness of the augmented Hodge line bundle on the Zariski open subset $B \subset \olB$ was established under additional assumptions by Sommese \cite{MR0324078}, and in general by Bakker--Brunebarbe--Tsimerman \cite{BBT18}.
\end{remark}

\subsection{Period mappings and extensions}

Let
\begin{equation} \label{iE:Phi}
  \Phi : B \ \to \ \GsD
\end{equation}
denote the period map induced by \eqref{iE:vhs}.  Here $D$ is a period domain parameterizing weight $n$, $Q$--polarized Hodge structures on the vector space $V$ (with fixed Hodge numbers), and $\pi_1(B) \sur \Gamma \subset \tAut(V,Q)$ is the monodromy representation.

In order to state our results, we need to recall two extensions 
\begin{equation}\label{E:extns}
\begin{tikzcd}
  \olB \arrow[r,->>,"\Phione"'] \arrow[rr,bend left,"\Phizero"]
  & \olPone \arrow[r,->>] 
  & \olPzero
\end{tikzcd}
\end{equation}
of the period map \eqref{iE:Phi}.  We refer the reader to \cite[\S2]{GGRhatPT} for a detailed discussion of these maps.  Briefly, the normal crossing divisor $Z$ is a finite disjoint union of quasi-projective $Z_I^* \subset Z$ over which the period map induces a variation of nilpotent orbits (via Schmid's nilpotent orbit theorem).  Passing to the weight-graded quotient induces a period map on $Z_I^*$, and $\left.\Phizero\right|_{Z_I^*}$ is this period map (possibly modulo a finite quotient, cf.~\cite[\S\S2.3--2.4]{GGRhatPT}).  The extension $\Phione$ encodes both this period map, and level one extension data in the limiting mixed Hodge structure underlying the nilpotent orbit.  Both $\olPone$ and $\olPzero$ are compact Hausdorff topological spaces containing the image
\[
  \wp \ = \ \Phi(B)
\]
as an open, dense subset.  If we take $Z_W$ to be the disjoint union of the $Z_I^*$ with equivalent weight filtrations, then the restrictions of $\Phizero$ and $\Phione$ to $Z_W$ are proper and analytic.  

\begin{remark} \label{R:gglr}
The image $\olPzero = \Phizero(\olB)$ is conjectured to be algebraic \cite{GGLR}.  In the classical case that $D$ is Hermitian and $\Gamma$ is arithmetic, $\olPzero$ is the closure of $\wp = \Phi(B)$ in the Satake--Baily--Borel compactification $\overline{\GsD}{}^\mathsf{S}$ of $\GsD$.  So in this case, the conjectured algebraicity is immediate.  In general, the quotient $\GsD$ admits no algebraic structure \cite{MR3234111}.  Nonetheless the image $\wp = \Phi(B)$ is quasi-projective (Remark \ref{R:BBT}).  If one could show that $\Le$ is semi-ample on $\olB$ (not just $B$), then it would follow that $\olPzero$ is the projective completion of the quasi-projective $\wp$.\footnote{For applications for moduli, it is not enough to have a projective completion of $\wp$ -- one also wants an extension of the period map.}
\end{remark}

\subsection{The classical story} \label{S:classical}

It is instructive to begin with a review of related results in the ``classical case'' that $D$ is Hermitian and $\Gamma$ is arithmetic.  (An underlying goal of this paper is to develop analogs of these results for period mappings.)  The locally Hermitian symmetric space $\GsD$ is quasi-projective and admits several projective completions.  The Hodge line bundle $\Le$ is defined on these completions and is free.  To be precise, let $\overline{\GsD}^\mathsf{S}$ be the Satake--Baily--Borel compactification.

\begin{theorem}[{\cite{MR0216035, MR0338456}}]\label{T:sbb}
The Hodge line bundle is the pullback of a line bundle $\Lambda \to \GsD$ that extends to an ample line bundle $\Le \to \overline{\GsD}^\mathsf{S}$.  The period map extends to $\Phizero : \olB \to \overline{\GsD}{}^\mathsf{S}$, and the Hodge line bundle $\Le$ is semi-ample over $\olB$.
\end{theorem}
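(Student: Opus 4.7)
The plan is to assemble three classical inputs: (i) the identification of $\Lambda$ over $B$ as the pullback of an automorphic line bundle on $\GsD$; (ii) the Baily--Borel construction \cite{MR0216035}, which produces the Satake compactification together with an ample extension of that automorphic line bundle; and (iii) Borel's extension theorem \cite{MR0338456}, which extends $\Phi$ across the boundary $Z$.  Semi-ampleness of the extended bundle on $\olB$ then follows formally.

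First I would establish descent of the Hodge line bundle.  When $D = G/K$ is Hermitian symmetric, the isotropy $K$ acts by a character on each $\tdet F^p$, where $F^\tinyb$ denotes the universal Hodge filtration on $D$, so $\tdet F^p$ is a $G$-equivariant line bundle on $D$.  Quotienting by $\Gamma$ (which acts on $D$ through $G$) produces a line bundle $\Lambda \to \GsD$, and the Hodge line bundle on $B$ is tautologically $\Phi^*\Lambda$.  Next I would invoke Baily--Borel: $\mathrm{Proj}$ of the graded ring of $\Gamma$-automorphic forms of weight a multiple of $\Lambda$ realises $\overline{\GsD}^\mathsf{S}$ as a projective variety and extends $\Lambda$ to an ample line bundle on it.

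For the extension of $\Phi$ I would invoke Borel's extension theorem: any locally liftable holomorphic map from the complement of a normal crossing divisor to an arithmetic quotient of a Hermitian symmetric space, with quasi-unipotent local monodromy around each boundary component, extends to a morphism into the Satake compactification.  Quasi-unipotence holds automatically for polarized variations of Hodge structure by the monodromy theorem, so this produces the desired $\Phizero : \olB \to \overline{\GsD}^\mathsf{S}$.

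Finally, semi-ampleness of $\Le \to \olB$ drops out: by continuity and density of $B$ in $\olB$, the pullback $(\Phizero)^*\Le_{\overline{\GsD}{}^\mathsf{S}}$ agrees with the Deligne extension $\Le$, and if $\Le_{\overline{\GsD}{}^\mathsf{S}}^{\otimes N}$ is very ample then its global sections pull back to a base-point-free family generating $\Le^{\otimes N}$ on $\olB$.  The main obstacle is the Baily--Borel step: establishing that enough high-weight automorphic forms exist to separate points and tangents of the compactification is the deep technical heart of the theorem.  Borel's extension and the final pullback argument are comparatively routine once Baily--Borel is in hand.
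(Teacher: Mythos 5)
Your route is the same one the paper takes: Theorem \ref{T:sbb} is stated there as classical background, with the proof delegated precisely to your two inputs --- Baily--Borel \cite{MR0216035} for the descent of the Hodge line bundle to an automorphic line bundle on $\GsD$ and the ampleness of its extension to $\overline{\GsD}^\mathsf{S}$ via $\mathrm{Proj}$ of the ring of automorphic forms, and Borel \cite{MR0338456} for the extension $\Phizero : \olB \to \overline{\GsD}{}^\mathsf{S}$. (One small redundancy: quasi-unipotence of the local monodromy need not be imported from the PVHS monodromy theorem; it is itself part of Borel's extension theorem.)

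There is, however, a genuine gap in your last step. Two line bundles on $\olB$ that agree over $B = \olB \bs Z$ need \emph{not} agree on $\olB$: they can differ by $\cO_{\olB}\left(\sum_i a_i Z_i\right)$, so ``continuity and density of $B$'' cannot identify $(\Phizero)^*\Le|_{\overline{\GsD}{}^\mathsf{S}}$ with the Deligne extension $\Le$ --- extensions of a line bundle across a divisor are not unique, and this identification is exactly where the content lies. What is needed is the characterization of the canonical extension (e.g.\ via logarithmic growth of the Hodge norm on flat multivalued sections, as in \cite{MR840721}) combined with the moderate growth of automorphic forms near the Satake boundary; this is how Mumford \cite{MR471627} establishes precisely this identification in the toroidal setting, and it is the mechanism by which sections of powers of the ample Baily--Borel bundle pull back to sections of powers of $\Le$, giving semi-ampleness. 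Note also that the identification as integral line bundles requires \emph{unipotent} (not merely quasi-unipotent) local monodromy; in general it holds only as $\bQ$-line bundles or after passing to a power or finite cover --- the same caveat that makes $\Lambda$ descend to $\GsD$ only as a $\bQ$-bundle when $\Gamma$ has torsion, which your phrase ``weight a multiple of $\Lambda$'' implicitly, but only implicitly, addresses.
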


The SBB compactification has toroidal desingularizations
\[
  \overline{\GsD}^\mathsf{T} \ \sur \ \overline{\GsD}^\mathsf{S} \,.
\]
Let 
\[
  \mathrm{bd}^\mathsf{S} \ = \ (\overline{\GsD}^\mathsf{S}) - (\GsD)
  \tand 
  \mathrm{bd}^\mathsf{T} \ = \ (\overline{\GsD}^\mathsf{T}) - (\GsD)
\]
be the corresponding boundaries.  The log canonical bundles associated with these boundaries are semi-ample.

\begin{theorem}[{\cite{MR471627}}] \label{T:mum}
In the case that $D$ is Hermitian and $\Gamma$ arithmetic, 
\begin{eqnarray*}
   \overline{\GsD}^\mathsf{S} & = & 
   \mathrm{Proj} \op_d
   H^0( d\,(K_{\overline{\GsD}^\mathsf{S}} +
    [\mathrm{bd}^\mathsf{S}]))
   \\ & = & 
   \mathrm{Proj} \op_d
   H^0( d\,(K_{\overline{\GsD}^\mathsf{T}} + 
   [\mathrm{bd}^\mathsf{T}])) \,.
\end{eqnarray*}
\end{theorem}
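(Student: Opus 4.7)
The strategy is to reduce both displayed identities to the ampleness of the extended Hodge line bundle $\Le$ on $\overline{\GsD}^\mathsf{S}$ (Theorem \ref{T:sbb}), by establishing the proportionality
\[
  K_{\overline{\GsD}^\mathsf{S}} + [\mathrm{bd}^\mathsf{S}]
  \ \sim_\bQ \ c\,\Le
\]
for some positive rational constant $c$, and then lifting this identity to a toroidal resolution.

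First I would verify proportionality on the open stratum $\GsD$. In the classical setting $D = G_\bR/K$ with $\fg_\bC = \fk_\bC \oplus \fp^+ \oplus \fp^-$, and both $K_D$ and $\Lambda$ descend from characters of $K$: namely $\tdet(\fp^+)^{-1}$ and the character encoding the Hodge filtration on $V$. A direct computation in root data shows that these characters are commensurable, giving $K_{\GsD} = c\,\Lambda$ for an explicit positive rational $c$ depending on the type of $D$. This is the Hirzebruch--Mumford proportionality on the interior.

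Next I would extend the identity across the boundary. On a smooth toroidal compactification $\overline{\GsD}^\mathsf{T}$, one verifies by local computation at each rational boundary component, using the toroidal charts, that the boundary divisor $[\mathrm{bd}^\mathsf{T}]$ precisely accounts for the discrepancy introduced by the logarithmic degeneration of the Hodge structure; this yields
\[
  K_{\overline{\GsD}^\mathsf{T}} + [\mathrm{bd}^\mathsf{T}]
  \ \sim_\bQ \ c\,\Le|_{\overline{\GsD}^\mathsf{T}}.
\]
Pushing forward by the projection $\pi : \overline{\GsD}^\mathsf{T} \to \overline{\GsD}^\mathsf{S}$, and using that $\Le$ on SBB pulls back to $\Le$ on the toroidal resolution together with $\pi_*\sO_{\overline{\GsD}^\mathsf{T}} = \sO_{\overline{\GsD}^\mathsf{S}}$, one obtains the corresponding identity on $\overline{\GsD}^\mathsf{S}$ (interpreted as $\bQ$-Cartier divisors on the normal projective variety). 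Since $\Le$ is ample on $\overline{\GsD}^\mathsf{S}$, both graded rings coincide with $\bigoplus_d H^0(d\,\Le)$ up to truncation, so their $\mathrm{Proj}$'s agree with $\overline{\GsD}^\mathsf{S}$, giving both equalities in the theorem.

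The main obstacle is the local computation at the deepest boundary strata of $\overline{\GsD}^\mathsf{S}$, where the variety is most singular and the toroidal charts are most intricate; one must track carefully how the logarithmic poles of a basis of sections of $\Le$ near the boundary match the discrepancies $K_{\overline{\GsD}^\mathsf{T}/\overline{\GsD}^\mathsf{S}}$ together with the boundary divisor under $\pi$. This is precisely what is carried out via the reduction theory at each rational boundary component in \cite{MR471627}.
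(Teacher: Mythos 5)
Your proposal is correct and coincides with the intended proof: the paper does not actually prove Theorem \ref{T:mum} but quotes it from Mumford \cite{MR471627}, and your route — interior proportionality $K_{\GsD} \sim_{\bQ} c\,\Lambda$ via characters of $K$, the boundary identity $K_{\overline{\GsD}^\mathsf{T}} + [\mathrm{bd}^\mathsf{T}] = \pi^*\bigl(\sO_{\overline{\GsD}^\mathsf{S}}(1)\bigr)$ (the very fact the paper itself invokes as \cite[(3.4)]{MR471627} in Example \ref{eg:Zuo}), then descent through $\pi_*\sO_{\overline{\GsD}^\mathsf{T}} = \sO_{\overline{\GsD}^\mathsf{S}}$ combined with Baily--Borel ampleness of the extended Hodge bundle (Theorem \ref{T:sbb}) — is exactly Mumford's argument. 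The only points you should make explicit are the standard reduction to a neat subgroup of finite index (so the relevant line bundles genuinely descend rather than only as orbifold $\bQ$-bundles, a caveat the paper acknowledges in the footnote to Example \ref{eg:Zuo}) and the interpretation of $K_{\overline{\GsD}^\mathsf{S}} + [\mathrm{bd}^\mathsf{S}]$ when the Satake boundary has codimension $\ge 2$, where the first graded ring must be read as sections of the reflexive powers of a Weil divisor class, which your pushforward identity shows is in fact $\bQ$-Cartier and proportional to $\Le$.
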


\subsection{Hodge line bundle}

What one would like is to prove that the Hodge line bundle \eqref{E:Le} is semi-ample over $\olB$ (cf.~Remark \ref{R:gglr}).  Outside the classical setting of Theorem \ref{T:sbb}, this is known in only a few special cases (eg.~ \cite{GGLR}).  Nonetheless, we conjecture that an even stronger statement holds when the differential of the period map is generically injective.

\begin{conjecture} \label{iC:LZ}
\emph{(a)}
If the differential of the period map $\Phi : B \to \GsD$ is generically injective, then there are $0\le a_i \in\bQ$ so that the $\bQ$ line bundle $\Le-\sum a_i [Z_i]$ is semi-ample.

\emph{(b)}
Under suitable local Torelli-type assumptions \emph{(Remark \ref{iR:loctor})}, there exist integers $0 \le a_i \in \bZ$ and $m_0$ so that $m \Le - \sum a_i [Z_i]$ is ample for $m \ge m_0$.
\end{conjecture}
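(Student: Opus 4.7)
The strategy is to promote the Bakker--Brunebarbe--Tsimerman semi-ampleness of $\Lambda$ on $B$ (Remark \ref{R:BBT}) to semi-ampleness of a $\bQ$-twist of $\Le$ on $\olB$, using the nef-ness of $\Le$ (deduced in \cite{GGLR}) together with a local analysis of $\Le$ near each boundary stratum via Schmid's nilpotent orbit theorem, and the extensions $\Phione, \Phizero$ of \eqref{E:extns}.

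For part \emph{(a)}, first apply BBT to obtain, for some $m \gg 0$, a proper morphism $\psi_m : B \to Y_m$ with $Y_m$ quasi-projective and $\Lambda^{\ot m} = \psi_m^*\sO(1)$; generic injectivity of $d\Phi$ forces $\psi_m$ to be generically finite, so $\Lambda$ is in particular big. Using the Hodge-theoretic norm estimates of \cite{\CKS} to control the growth of sections of $\Lambda^{\ot m}$ across $Z$, one identifies $H^0(\olB,\Le^{\ot m})$ with a natural subspace of $H^0(B,\Lambda^{\ot m})$, and the resulting rational map $\olB \dashrightarrow \bP^N$ extends $\psi_m$ with base locus contained in $Z$. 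Let $m a_i \in \bZ_{\ge 0}$ denote the generic vanishing order of this linear system along $Z_i$. Schmid's nilpotent orbit theorem provides an explicit formula for the Deligne extension $\cFe^p$ in coordinates transverse to a smooth point of $Z_I^*$ in terms of the limiting Hodge filtration and the monodromy logarithms $N_i$; from this one reads off the $a_i$ as rational numbers $\ge 0$ that stabilise for sufficiently divisible $m$. The heart of the argument is then to show that the twisted linear system $|\Le^{\ot m} - m \sum a_i [Z_i]|$ is base-point free on $\olB$, which I would establish by constructing, at each boundary point $b \in Z_I^*$, a section of $\Le^{\ot m}$ whose vanishing along each $Z_i \ni b$ is exactly $m a_i$, built from the LMHS data at $b$ and the fact that $\Phizero|_{Z_W}$ is proper and analytic.

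For part \emph{(b)}, the stronger local Torelli hypothesis ensures that $\Phione$ (or at least $\Phizero$) is generically injective on every stratum $Z_I^*$, so that the morphism defined by $|m\Le - \sum a_i [Z_i]|$ (with $a_i$ taken to be integers after clearing denominators) is quasi-finite. Combining this with the big-ness of the $\bQ$-divisor $\Le - \sum a_i[Z_i]$, which follows from a top self-intersection computation using the nef-ness of $\Le$ and the curvature formula for the Hodge metric, Nakai--Moishezon then yields ampleness of $m\Le - \sum a_i[Z_i]$ for all $m \gg 0$.

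The main obstacle is the computation of the vanishing orders $a_i$ from the nilpotent orbit data and the verification of base-point freeness after the twist. Schmid's norm estimates pin down the growth rate of the Hodge metric on $\Le$ transverse to each $Z_i$, and these growth rates are the natural candidates for $a_i$; but converting metric growth into sharp vanishing orders of \emph{holomorphic} sections, and simultaneously producing enough sections whose vanishing matches these orders exactly, is delicate --- in particular because a single irreducible $Z_i$ may meet many deeper strata $Z_I^*$ carrying different LMHS data, so the $a_i$ must be chosen to interpolate between possibly distinct boundary behaviours. Once these vanishing computations are settled, the bridge to ampleness in part \emph{(b)} follows from standard positivity arguments on the algebraic completion $\olPzero$ predicted in Remark \ref{R:gglr}.
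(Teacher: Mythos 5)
What you are proving is stated in the paper only as a conjecture: the paper offers no proof of Conjecture \ref{iC:LZ}, just two special cases of part (b) --- Proposition \ref{P:pg1} ($Z$ irreducible, $\td\Phione$ injective on $\Phizero$--fibres, $\mathrm{Eff}^1(\olB)$ finitely generated) and Theorem \ref{T:dim=2} ($\tdim B=2$). Your proposal does not close the conjecture either, because the steps you yourself flag as ``delicate'' are not technical refinements but the entire open content. Concretely: (i) there is no mechanism in your plan for producing holomorphic sections of $\Le^{\ot m}$ over $\olB$ with prescribed vanishing at the boundary. The norm estimates of \cite{MR840721} control the growth of the Hodge metric, and the restriction $H^0(\olB,\Le^{\ot m}) \to H^0(B,\Lambda^{\ot m})$ is injective, but nothing shows its image is large enough; in the classical case this supply of sections comes from the Baily--Borel automorphic-form machinery, and no analog exists in general --- indeed $\GsD$ carries no algebraic structure outside the classical case \cite{MR3234111}, and semi-ampleness of $\Le$ on $\olB$ is exactly the open conjecture of \cite{GGLR} recalled in Remark \ref{R:gglr}. (ii) The coefficients $a_i$ cannot be read off from local LMHS data transverse to $Z_i$, as you propose. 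On an irreducible curve $C = Z_i$ contracted by $\Phizero$ to a point one has $\Le \cdot C = 0$ (\S\ref{S:loctor0}), so positivity of $(m\Le - \sum a_j[Z_j])\cdot C$ must come entirely from $-\sum_j a_j\, Z_j\cdot Z_i > 0$. In the paper's surface case the $a_i$ are obtained as a positive eigenvector of the negative definite intersection matrix \eqref{E:A} of the contracted configuration (Hodge Index Theorem plus the Perron--Frobenius-type Lemma \ref{L:A}); as the remark following Theorem \ref{T:dim=2} stresses, the coefficients reflect the singularity to which the $Z_i$ are contracted --- global intersection-theoretic data that a vanishing-order computation along a single $Z_i$ cannot see, and which your ``interpolation between boundary behaviours'' does not address.

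Part (b) of your plan has the same circularity plus an unsupported step: the Nakai--Moishezon argument presupposes that $|m\Le - \sum a_i[Z_i]|$ defines a morphism, i.e.\ the base-point-freeness from part (a) that was never established, and quasi-finiteness of that morphism is asserted rather than proved --- bigness together with nefness of $\Le$ gives nothing on the curves contracted by $\Phizero$, which is precisely where the twist must do all the work. By contrast, the paper's partial results proceed curve by curve: for $C$ inside a $\Phizero$--fibre, Proposition \ref{P:pg1} gets $-[Z]\cdot C>0$ from ampleness of the conormal bundle $\left.\cN^*_{Z/\olB}\right|_C$, which follows from injectivity of $\td\Phione$ on $\Phizero$--fibres via \cite[Theorem 4.3]{GGRhatPT}, and then invokes finite generation of $\mathrm{Eff}^1(\olB)$ to make $m_0$ uniform over all curves (Remark \ref{R:eff}). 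Your proposal supplies no substitute for either ingredient, so even as a strategy for the known special cases it is incomplete.
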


\noindent Part (b) of the conjecture is known to hold in two cases: when $Z = Z_1$ is irreducible we have Proposition \ref{P:pg1}; and when $B$ is a surface we have the stronger Theorem \ref{T:dim=2}.  

\begin{proposition}\label{P:pg1}
Suppose that $Z = Z_1$ consists of a single irreducible component, and that $\td\Phione$ is injective on $\Phizero$--fibres.  Assume also that the effective cone $\mathrm{Eff}^1(\olB)$ of 1--cycles is finitely generated.  Then the line bundle $\Pi = m\Le - [Z]$ is ample for $m \ge m_0$.
\end{proposition}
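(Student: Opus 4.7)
The plan is to apply Kleiman's criterion using the finite generation of $\mathrm{Eff}^1(\olB)$. Pick representative curves $C_1,\dots,C_N$ for the extremal rays. Then $\Pi = m\Le - [Z]$ is ample precisely when $\Pi\cdot C_j>0$ for every $j$. Since $\Le$ is nef on $\olB$ (by the Example preceding Conjecture~\ref{iC:LZ}, obtained from nefness of $\Lambda$ on $B$ together with \cite[Theorem~1.4.1]{GGLR}) and the finitely many numbers $C_j\cdot [Z]$ are bounded, the inequality $\Pi\cdot C_j>0$ is automatic once $m$ is large enough, \emph{provided} one verifies the strict positivity $\Le\cdot C_j>0$ for every $j$ with $C_j\cdot[Z]>0$. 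The entire proof therefore reduces to this last strict-positivity claim on the finite list of extremal rays.

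I would split the $C_j$ according to whether or not they are contained in $Z$. For $C_j\subset Z$ with $C_j\cdot[Z]\le 0$ there is nothing to check. For $C_j\subset Z$ with $C_j\cdot[Z]>0$ I would use the description of $\Phizero|_{Z_I^*}$ as the proper analytic period map on the weight-graded quotient: $\Le$ pulls back from an ample class on the image of that quotient period map, so $\Le\cdot C_j>0$ unless $\Phizero$ contracts $C_j$, and such contractions are ruled out by the injectivity hypothesis on $\td\Phione$ combined with the irreducibility of $Z$. For $C_j\not\subset Z$ the key claim is: \emph{if $C\subset\olB$ is irreducible with $C\not\subset Z$ and $\Le\cdot C=0$, then a contradiction arises.} The Bakker--Brunebarbe--Tsimerman semi-ampleness of $\Lambda$ over $B$ (Remark~\ref{R:BBT}) realises $\Lambda$ as the pullback of an ample class from the projective completion of $\wp=\Phi(B)$, so $\Le\cdot C=0$ forces $\Phizero$ to contract $C$, and $C$ lies in a $\Phizero$-fibre. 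The hypothesis then forces $\Phione|_C$ to be non-constant, i.e.\ $\Phione$ moves an open subset of $C$ inside the level-one extension data over $\Phizero(C)$. Since that extension data is produced entirely along $Z$ (via Schmid's nilpotent orbit theorem and the construction of $\Phione$ in \cite[\S2]{GGRhatPT}), and $Z=Z_1$ is a \emph{single} irreducible component, such movement forces $C$ to be contained in $Z$, a contradiction.

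The main obstacle is making the implication ``$\Le\cdot C=0 \Longrightarrow \Phizero$ contracts $C$'' precise on the compactification $\olB$ (the semi-ampleness of \cite{BBT18} only directly gives this on $B$), together with the identification of level-one movement inside a $\Phizero$-fibre with contact with $Z$. For the former I would combine the curvature representation of $c_1(\Le)$ as a non-negative horizontal $(1,1)$-form on $D$ with Schmid's asymptotic description of this form near $Z$; for the latter I would use the explicit construction of $\olPone$ in \cite[\S2]{GGRhatPT} via nilpotent orbits and their extension classes. Once these two points are secured, the proof reduces, as indicated, to the finite numerical check afforded by the finite generation of $\mathrm{Eff}^1(\olB)$, and $m_0$ becomes explicit as the smallest integer for which $m_0\,\Le\cdot C_j > C_j\cdot[Z]$ on every extremal $C_j$.
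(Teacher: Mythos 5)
Your reduction via the finitely generated effective cone (pick generators $C_j$, get a uniform $m_0$ once $\Pi\cdot C_j>0$ is known for each $j$) is exactly the paper's Remark \ref{R:eff}, and your handling of curves with $\Le\cdot C>0$ is fine. The gap is in the curves $C\subset Z$ contracted by $\Phizero$, which you dispose of incorrectly in two ways. First, for $C_j\subset Z$ with $C_j\cdot[Z]\le 0$ there is emphatically something to check: if $\Le\cdot C_j=0$ and $C_j\cdot[Z]=0$, then $\Pi\cdot C_j=0$ for \emph{every} $m$ and ampleness fails, so you need the strict inequality $[Z]\cdot C_j<0$, which you never establish. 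Second, and more fundamentally, your claim that contractions of curves in $Z$ by $\Phizero$ ``are ruled out by the injectivity hypothesis on $\td\Phione$'' is false: the hypothesis concerns $\td\Phione$, not $\td\Phizero$, precisely in order to \emph{allow} positive-dimensional $\Phizero$-fibres $\fibrezero\subset Z$ along which $\Phione$ moves in the level-one extension data (a compact torus $T_W$); compare Theorem \ref{iT:pg1ample} and Example \ref{eg:Zuo}, where such fibres are the abelian varieties $J_I$. Your argument in the $C\not\subset Z$ case does correctly show that a contracted curve must lie in $Z$ (on $B$ the maps $\Phi$, $\Phizero$, $\Phione$ coincide, so fibres meeting $B$ are discrete under the hypothesis), but once $C\subset Z$ your proof has no mechanism to produce positivity of $\Pi\cdot C$.

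That missing case is the heart of the paper's proof (\S\ref{S:prf-pg1}): for $C\subset\fibrezero$ one has $\Le\cdot C=0$ by \S\ref{S:loctor0}, and the injectivity of $\td\Phione$ on $\Phizero$-fibres is used to conclude that the conormal bundle $\left.\cN^*_{Z/\olB}\right|_C$ is ample (cf.\ the discussion following \cite[Theorem 4.3]{GGRhatPT}, where $\left.[Z]\right|_{\fibrezero}$ is controlled through line bundles pulled back under $\Phione$ from $T_W$), whence $-[Z]\cdot C>0$ and $\Pi\cdot C>0$ for all $m\ge 0$ on these curves. Without this input your argument cannot close; the statement you would need instead --- that no curve in $Z$ is contracted by $\Phizero$ --- would render the hypothesis on $\td\Phione$ vacuous and fails already in the classical toroidal examples. (Your remaining worry, extending ``$\Le\cdot C=0\Rightarrow\Phizero(C)$ is a point'' from $B$ to $\olB$, is handled in \S\ref{S:loctor0} via \cite[Theorem 1.4.1]{GGLR} and \eqref{E:c1on0}, so that part of your plan is sound and does not require the Schmid-asymptotics detour you sketch.)
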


\noindent The proposition is proved in \S\ref{S:prf-pg1}.

For the theorem, index the irreducible components $Z_i$ of $Z = Z_1 \cup \cdots \cup Z_\nu$ so that $\Phizero(Z_i)$ is a point if and only if $i \le \mu \le \nu$.

\begin{theorem}\label{T:dim=2}
Suppose that $\tdim\,B=2$ and assume that the differential of $\Phi : B \to \GsD$ is everywhere injective.  Then there exists $a_i \ge 0$ so that the line bundle 
\[
  \Pi \ = \ m \Le \ - \ \sum_{i=1}^\mu a_i [Z_i]
\]
is ample for $m \gg 0$.
\end{theorem}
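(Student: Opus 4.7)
The plan is to verify ampleness of $\Pi = m\Le - \sum_{i=1}^\mu a_i [Z_i]$ via Kleiman's criterion on the finite-dimensional N\'eron--Severi space $N^1(\olB)_\bR$ of the surface $\olB$. The hypothesis that $\td\Phi$ is everywhere injective implies generic local Torelli, so by item (b) of the opening example $\Le$ is big; combined with the nef-ness recalled in the same paragraph, this provides a big-and-nef reference divisor around which to perturb by an effective combination of boundary components. The Hodge index theorem will produce a negative-definite intersection configuration on the ``null face'' of the Mori cone, and a judicious choice of the $a_i$ converts this into strict positivity of $\Pi$ on the entire Mori cone.

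The first step is to identify the irreducible curves $C \subset \olB$ with $\Le \cdot C = 0$. Analysis of the Deligne extension (or equivalently, of the factorization of $\Le$ through the graded-quotient picture encoded by $\Phizero$, cf.\ \cite{GGLR}) shows that $\Le \cdot C = 0$ precisely when $\Phizero$ collapses $C$ to a point. Everywhere-injectivity of $\td\Phi$ rules out any such $C$ meeting $B$, since $\Phi(C \cap B)$ would then be positive-dimensional; as each irreducible curve $C \subset Z$ coincides with one of the components $Z_i$, the null curves of $\Le$ form a subset $\{Z_i\}_{i \in I_0}$ of the collapsed family $\{Z_1,\ldots,Z_\mu\}$. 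With $\Le$ big and nef and $\Le \cdot Z_i = 0$ for $i \in I_0$, the Hodge index theorem forces the intersection form to be negative semi-definite on $N := \mathrm{span}_\bR\{[Z_i]\}_{i \in I_0} \subset N^1(\olB)_\bR$, and distinctness of the irreducible components (after merging any numerically equivalent ones) upgrades this to negative definiteness. Since the off-diagonal entries $Z_i \cdot Z_j$ ($i \neq j$) are non-negative, the matrix $M := (Z_i \cdot Z_j)_{i,j \in I_0}$ is a symmetric $M$-matrix, so $-M^{-1}$ has non-negative entries; this yields rational $a_i > 0$ for $i \in I_0$ such that $(\sum_{i \in I_0} a_i Z_i) \cdot Z_j < 0$ for each $j \in I_0$. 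Set $a_i = 0$ for $i \in \{1,\ldots,\mu\}\setminus I_0$ and $D := \sum_i a_i [Z_i]$.

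With $D$ chosen, it suffices to show that $\Le - \epsilon D$ is ample for all sufficiently small $\epsilon > 0$, since then $\Pi = m\Le - D$ is ample for $m = 1/\epsilon$. Fix an ample class $A$ on $\olB$ and consider the compact slice $K := \overline{NE}(\olB) \cap \{A \cdot \alpha = 1\}$ of the Mori cone. The zero set $K_0 := K \cap \Le^\perp$ is the convex hull of the rescaled null classes $\{[Z_i]/(A \cdot Z_i)\}_{i \in I_0}$, on which $-D \cdot \alpha > 0$ by construction; by continuity this extends to a neighborhood $U \subset K$, where $(\Le - \epsilon D) \cdot \alpha \geq 0 + \epsilon(-D \cdot \alpha) > 0$ for every $\epsilon > 0$. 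On the compact complement $K \setminus U$, one has the uniform lower bound $\Le \cdot \alpha \geq c > 0$ (since $\Le \cdot \alpha$ is continuous and vanishes only on $K_0$) and the uniform upper bound $|D \cdot \alpha| \leq C < \infty$, so $(\Le - \epsilon D) \cdot \alpha > 0$ whenever $\epsilon < c/C$. Kleiman's criterion then delivers ampleness.

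The main obstacle is step one: outside the classical setting of Theorem \ref{T:sbb}, $\Le$ is not \emph{a priori} pulled back from an ample class on an algebraic base, so the identification of its null curves with (a subset of) the collapsed components must proceed from the analytic properties of the Deligne extension and the description of $\Phizero$ via graded limiting mixed Hodge structures (cf.\ \cite{CMSP}, \cite{GGLR}, \cite{GGRhatPT}). The upgrade from negative semi-definiteness to negative definiteness is a minor technical issue. Once these inputs are secured, the Kleiman perturbation in step three is routine.
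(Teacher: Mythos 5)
Your proposal is correct in substance and shares the paper's Hodge-theoretic skeleton, but the endgame is genuinely different. The paper's proof (\S\ref{S:prfdim=2}) makes the same first two moves: the null curves of $\Le$ are identified, via \eqref{E:c1on0}, with exactly the contracted components $Z_1,\ldots,Z_\mu$ (so your $I_0$ is all of $\{1,\ldots,\mu\}$), and the Hodge Index Theorem gives negative definiteness of the matrix \eqref{E:A} (cited to \cite[Lemma 3.1.1]{GGLR}). Where you diverge: for the weights, the paper uses a Perron--Frobenius-type argument (Lemma \ref{L:A}) producing a positive eigenvector of $A$, whereas you invert the $M$-matrix; these are equivalent, but your route has the small advantage of not needing the connectedness of $Z$ that the paper's proof of Lemma \ref{L:A} invokes to get strict positivity of the entries (in your version, $a_j=0$ is immediately contradicted by $\sum_i a_i\,Z_i\cdot Z_j=-1$ with non-negative off-diagonal terms). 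For the conclusion, the paper reduces ampleness to the curve-by-curve inequality \eqref{E:dim=2} by citing \cite[Lemma 5.4.20]{GGLR}, while you run Kleiman's criterion on a compact slice of $\overline{NE}(\olB)$. Your compactness argument is more self-contained and, notably, it resolves the uniformity-in-$m$ issue (a single $m$ working for all curves) that the cited lemma is doing silent work for in the paper; compare Remark \ref{R:eff}, where the analogous uniformity in Proposition \ref{P:pg1} is purchased with a finite-generation hypothesis on $\mathrm{Eff}^1(\olB)$ that Theorem \ref{T:dim=2} does not assume.

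Two steps need tightening, though both are fillable by standard surface arguments. First, your assertion that $K_0 = K \cap \Le^\perp$ is the convex hull of the null curve classes is exactly the load-bearing point of the Kleiman argument (without it, the lower bound $\Le\cdot\alpha \ge c$ on $K\setminus U$ fails if the null face contains directions not represented by curves), and it does not follow from continuity alone. It is true: since $\Le$ is big and nef, $\Le^2>0$, so by the index theorem the intersection form is negative definite on $\Le^\perp$; hence every nonzero class in the face $\Le^\perp\cap\overline{NE}(\olB)$ has negative square, so each extremal ray of that face is an extremal ray of $\overline{NE}(\olB)$ of negative square and is therefore spanned by an irreducible curve, necessarily one of the $Z_i$ with $i\le\mu$. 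You should supply this (or an equivalent) argument. Second, your upgrade from negative semi-definiteness to definiteness ``after merging any numerically equivalent ones'' is not the right fix: it does not exclude relations such as $Z_1+Z_2\equiv 2Z_3$. The standard repair uses effectivity: if $\sum c_i Z_i\equiv 0$ nontrivially, split into effective parts $E_1\equiv E_2$ with disjoint supports; then $E_1^2=E_1\cdot E_2\ge 0$ while semi-definiteness gives $E_1^2\le 0$, so $E_1$ is numerically trivial, contradicting $A\cdot E_1>0$ for $A$ ample. With these two repairs your proof is complete and, if anything, makes explicit a uniformity point the paper delegates to \cite[Lemma 5.4.20]{GGLR}.
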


\noindent The theorem is proved in \S\ref{S:prfdim=2}.

\begin{remark}[Local Torelli for $(\olB,Z;\Phi)$]\label{iR:loctor}
Conjecture \ref{iC:LZ}(b) alludes to ``suitable local Torelli-type assumptions''.  What we have in mind here is a local Torelli condition for the triple $(\olB,Z;\Phi)$.  Specifically (and with the notations explained in \S\ref{S:dce}) the Gauss--Manin connection $\nabla$ on $\cV$ induces a natural map
\begin{equation}\label{pgi:15}
  \Psi : T_\olB(-\log Z) \ \to \ \tGr_{\cFe}^{-1}\tEnd(\cEe) \,.
\end{equation}
We say that the triple $(\olB,Z;\Phi)$ satisfies the \emph{local Torelli property} if \eqref{pgi:15} is injective.  Local Torelli for the triple $(\olB,Z;\Phi)$ implies local Torelli for the period maps $\Phi$ and $\Phizero|_{Z_W}$.
\end{remark}

\subsection{The log canonical bundle: nef and semi-ample}

Recall that the log canonical bundle $K_\olB + [Z]$ is known to be big under generic local Torelli \cite{MR1803724}.  To this we add

\begin{conjecture}
If the differential of the period map $\Phi : B \to \GsD$ is injective at some point (generic local Torelli holds), then $K_\olB + [Z]$ is nef and big.  If the differential of the period map is injective everywhere (local Torelli holds), then $K_\olB+[Z]$ is semi-ample.
\end{conjecture}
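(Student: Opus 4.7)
The bigness in the first assertion is Zuo's theorem \cite{\Zuo}, so the new content is (i) nef-ness of $K_\olB + [Z]$ under generic local Torelli, and (ii) semi-ampleness under the stronger local Torelli hypothesis.

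For (i), I would adapt Zuo's Higgs-bundle curvature argument. Generic local Torelli gives a Zariski-open $U \subset \olB$ on which the Kodaira--Spencer map $\Psi : T_\olB(-\log Z) \to \tGr_\cFe^{-1}\tEnd(\cEe)$ of \eqref{pgi:15} is injective. Pulling back the Hodge metric via $\Psi$ yields a singular Hermitian metric $h_\Psi$ on $T_\olB(-\log Z)|_U$, whose curvature current is semi-negative by Griffiths' formula for sub-Higgs bundles of a polarized VHS. Dualizing and taking the top exterior power produces a singular Hermitian metric on $K_\olB + [Z]$ whose curvature current is semi-positive on $U$, and which the Schmid--Cattani--Kaplan--Schmid asymptotic analysis of Hodge metrics extends across $Z$ without loss of positivity; hence $K_\olB + [Z]$ is pseudoeffective. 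Upgrading to nef-ness proceeds by a curve test: for an irreducible $C \subset \olB$ meeting $U$ the intersection $(K_\olB + [Z]) \cdot C$ is nonnegative from the constructed current, and for $C$ contained in a boundary stratum one re-runs the construction on the stratum, using that it carries its own polarized VHS via Schmid--CKS, and inducts on dimension.

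For (ii), local Torelli gives pointwise injectivity of $\Psi$, so $h_\Psi$ is smooth and $K_\olB + [Z]$ is nef by (i). Moreover, local Torelli for the triple $(\olB, Z; \Phi)$ restricts to local Torelli on each closed stratum $Z_W$ by Remark \ref{iR:loctor}, so Zuo's bigness theorem applied stratum by stratum shows that $(K_\olB + [Z])|_{Z_W}$ is big, i.e.\ $K_\olB + [Z]$ is log-big in Fujino's sense. Fujino's log base point free theorem for log canonical pairs then yields semi-ampleness of $K_\olB + [Z]$.

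The principal obstacle in both parts is the boundary analysis: controlling the asymptotic behavior of the Hodge metric, and of the induced metric on $T_\olB(-\log Z)$, across the normal crossing divisor $Z$. This requires Schmid--CKS nilpotent orbit asymptotics together with the compatibility of the Higgs structure with the Deligne extension $\cEe$. A secondary but non-trivial step is the inductive argument in (i), which requires verifying that the ambient generic local Torelli hypothesis descends to a generic local Torelli hypothesis for the induced period mapping on each boundary stratum; this reduces via the nilpotent orbit theorem to a Kodaira--Spencer comparison between the ambient variation and its weight-graded limit on the stratum.
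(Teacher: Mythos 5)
This statement is an open conjecture in the paper: the authors prove no part of it, and their closest results (Theorems \ref{T:pg1} and \ref{T:pg1ample}) assume the strictly stronger local Torelli condition for the \emph{triple} $(\olB,Z;\Phi)$, namely injectivity of $\Psi : T_\olB(-\log Z) \to \tGr_{\cFe}^{-1}\tEnd(\cEe)$ on all of $\olB$, which by Lemma \ref{L:ltc2} amounts to injectivity of the level-one extension-data differentials $\td\Phione_I$ on every stratum together with linear independence of the $\{N_i \mid i \in I\}$. Neither of these follows from injectivity of $\td\Phi$ on $B$ (Remark \ref{iR:loctor} gives the implication only in the opposite direction), and this conflation is the central gap in your plan. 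Concretely: your induction ``re-running the construction on each boundary stratum'' requires that (generic) local Torelli descend to the induced period map on the stratum, but that induced map is the weight-graded map $\Phizero|_{Z_I^*}$, whose differential can vanish identically on positive-dimensional fibres $\fibrezero$ no matter how nondegenerate $\Phi$ is on $B$. For a curve $C \subset \fibrezero$ your construction therefore produces nothing, and pseudoeffectivity of $K_\olB+[Z]$ (even with a closed positive curvature current) does not imply $(K_\olB+[Z])\cdot C \ge 0$ for curves inside the singular locus of the metric --- big bundles need not be nef, which is exactly why the paper must pass through corrected combinations $m\Le - \sum a_i[Z_i]$ in Theorem \ref{T:dim=2}. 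Where the paper does obtain positivity on such $C$ (under the triple hypothesis), it comes from a mechanism absent from your argument: the second fundamental form of $(K_\olB+[Z])^* \subset \cH = \Phi^*(\bfH)$, identified along $\fibrezero$ with the differential of the Gauss map of the extension-data map $\Phione|_\fibrezero$ (Lemma \ref{L:VIII.12b}). Any proof of the conjecture must engage this extension-data geometry; curvature of the graded Hodge metric alone cannot see it.

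Part (ii) moreover rests on a false intermediate claim. In Example \ref{eg:Zuo} the hypothesis of (ii) holds ($B = \cA_g$, period map an immersion), yet $K_{\overline\cA_g{}^\mathsf{T}} + [\mathrm{bd}^\mathsf{T}] = \pi^*\bigl(\cO_{\overline\cA_g{}^\mathsf{S}}(1)\bigr)$ is trivial on the abelian-variety fibres $J_I$ of $\pi$, so its restriction to the boundary strata contracted by $\pi$ is \emph{not} big; ``Zuo stratum by stratum'' fails because Zuo's theorem \cite{\Zuo} on a stratum applies to the graded period map, which is constant along those fibres. Hence the log-bigness input to the Fujino-type base-point-free argument is unavailable precisely in the motivating classical case, where semi-ampleness nevertheless holds (Theorem \ref{T:mum}) for the Gauss-map reasons above (there the Gauss map is constant, consistent with semi-ample but not ample). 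Note also that the base-point-free step presupposes nefness, which is the unproved content of part (i). In short, the statement remains a conjecture: your plan proves neither nefness beyond what Zuo's bigness already gives, nor semi-ampleness, because both reduce to the behavior of $K_\olB+[Z]$ on $\Phizero$-fibres, which your Higgs-curvature and stratum-induction tools do not control.
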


\begin{remark}
By the Base Point Free Theorem \cite{MR1233485}, 
If $K_{\olB+[Z]}$ is both nef and big, then $2(\tdim\,B + 2)!\,(\tdim\,B+2)\,(K_\olB+[Z])$ is semi-ample.
\end{remark}

\begin{theorem}[Theorem \ref{T:pg1}]\label{iT:pg1}
Assume that the local Torelli condition holds for $(\olB,Z;\Phi)$: the bundle map $\Psi : T_\olB(-\log Z) \to \tGr_{\cFe}^{-1}\tEnd(\cEe)$ is injective.  Then the line bundle $K_\olB + [Z]$ is nef and big. 
\end{theorem}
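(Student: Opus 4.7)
Bigness is already available in the literature. Since $\Psi$ restricted to $B \subset \olB$ is the Kodaira--Spencer map of the period mapping $\Phi$, the assumption that $\Psi$ is injective implies that the differential of $\Phi$ is injective at every point of $B$, which in particular implies generic local Torelli. Zuo's theorem \cite{\Zuo} quoted in the introduction then gives that $K_\olB + [Z]$ is big. So only the nef assertion is substantive, and the plan focuses there.

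The plan for nefness is to realize $K_\olB + [Z]$ as a quotient of a vector bundle built from graded Hodge pieces whose positivity is controlled by the nef line bundles $\tdet\cFe^p$. Starting from the assumed inclusion
\[
\Psi : T_\olB(-\log Z) \,\inj\, E \,:=\, \tGr_{\cFe}^{-1}\tEnd(\cEe) \,=\, \bigoplus_p \tHom\!\bigl(\cFe^p/\cFe^{p+1},\,\cFe^{p-1}/\cFe^p\bigr),
\]
dualize to obtain a sheaf surjection $E^* \sur \Omega^1_\olB(\log Z)$, and then take the top exterior power, with $d = \tdim\olB$, to arrive at
\[
\wedge^{d} E^* \,\sur\, K_\olB + [Z].
\]
The polarization-induced duality $(\cFe^p/\cFe^{p+1})^* \cong \cFe^{n-p}/\cFe^{n-p+1}$ rewrites $E^*$ as $\bigoplus_p (\cFe^{n-p+1}/\cFe^{n-p+2}) \ot (\cFe^p/\cFe^{p+1})$. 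Combined with the nefness of each $\tdet\cFe^p$, which holds on $B$ by \cite{MR0282990} and extends to $\olB$ via \cite[Theorem 1.4.1]{GGLR}, one aims to realize $K_\olB + [Z]$ (or a positive integer multiple) as a nonnegative $\bQ$-linear combination of the $\tdet\cFe^p$'s, modulo effective contributions from $Z$.

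A more geometric, and probably cleaner, route is to verify nefness curve-by-curve. For an irreducible $C \subset \olB$ with normalization $\tilde C \to C$, pull back the logarithmic Higgs bundle $(\bigoplus_p \cFe^p/\cFe^{p+1},\,\bar\nabla)$. Since $\Psi$ is injective on all of $\olB$, so is the pulled-back Higgs field on $\tilde C$, and a Higgs-bundle positivity argument on the curve then yields the degree bound $(K_\olB + [Z]) \cdot C \ge 0$.

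The chief obstacle is the combinatorial accounting required to ensure that the positivity contributions from the polarization and the nef line bundles $\tdet\cFe^p$ dominate any apparent negative contributions when one decomposes $\wedge^d E^*$ into its tensor-product summands. Additional care is required for curves $C$ that meet, or are contained in, $Z$: one must invoke the nilpotent-orbit data on the strata $Z_I^*$ and the induced variations of Hodge structure on these strata, as described in \S\ref{S:intro}.
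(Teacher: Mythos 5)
Your reduction of bigness to Zuo's theorem is legitimate (it matches the citation in the introduction), so nefness is indeed the substantive point. But neither of your two routes to nefness closes, and they both stall at the same place. In Route 1, exhibiting $K_\olB+[Z]$ as a quotient of $\tw^{d}E^*$ buys nothing unless $\tw^{d}E^*$ itself carries some positivity, and nefness of the determinants $\tdet\cFe^p$ does \emph{not} supply it: the graded Hodge bundles $\cEe^p$ are genuinely not nef (their curvature is a difference of two second-fundamental-form terms of opposite sign), and there is no expression of $K_\olB+[Z]$ as a nonnegative combination of the $\tdet\cFe^p$ modulo effective classes --- the "combinatorial accounting" you defer is the entire problem, and it cannot be done summand-by-summand. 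The positivity that actually exists lives at the level of the \emph{whole} horizontal bundle, not its graded tensor factors: over $B$ one has $E \simeq \Phi^*(\bfI)$, and the Griffiths--Schmid curvature of the period domain is negative definite in horizontal directions, $\left.\Theta_D\right|_{\bfI} = -A_1\wedge{}^t\overline{A_1}$ \eqref{E:ThetaI}. This is what the paper exploits: by homogeneity and Schur's lemma, $-c_1(\cH)$ for $\cH = \Phi^*(\tw^d\bfI)$ is pinched between positive multiples of $c_1(\Le)$ (Lemma \ref{L:VIII.10}), and then the second fundamental form of the line subbundle $(K_\olB+[Z])^* \inj \cH$ gives
\begin{equation*}
  c_1(K_\olB+[Z]) \ = \ -\tfrac{\bi}{2\pi}\left.\Theta_\cH\right|_{(K_\olB+[Z])^*} \,+\, \tau
  \ \ge \ \epsilon\,c_1(\Le) \,+\, \tau, \qquad \tau \ge 0,
\end{equation*}
(Lemma \ref{L:VIII.12a}), from which nef and big follow since $c_1(\Le)$ is a nef, big current whose restrictions to boundary strata compute degrees on curves in $Z$ (\S\ref{S:loctor0}, \eqref{E:c1on0}). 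Your plan never produces any lower bound for $c_1(K_\olB+[Z])$.

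Route 2 reduces to the same missing step. A Higgs-bundle positivity argument on the normalization $\tilde C$ bounds $\tdeg\,T_{\tilde C}(-\log)$, i.e.\ the log canonical bundle \emph{of the curve}; but what you must bound is $(K_\olB+[Z])\cdot C = \tdeg\,\tw^d\Omega^1_\olB(\log Z)|_C$, and relating the two requires degree control on the quotient $\tw^d E^*|_C \sur (K_\olB+[Z])|_C$ --- exactly the unproved positivity of Route 1. The decisive test is a curve $C$ contracted by $\Phizero$ inside $Z$, where $c_1(\Le)\cdot C = 0$: in Example \ref{eg:Zuo} (toroidal $\overline\cA_g{}^\mathsf{T}$) such curves are abelian-variety fibres on which $(K_\olB+[Z])\cdot C = 0$ exactly, so any correct argument must degenerate to an identity there rather than a strict inequality. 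In the paper this boundary contribution is carried by the term $\tau$, identified along $\Phizero$-fibres with the differential of the Gauss map of $\Phione$ (Lemma \ref{L:VIII.12b}); your proposal has no counterpart for it, and the appeal to "nilpotent-orbit data on the strata" is a placeholder precisely where the second fundamental form and the CKS current estimates (which promote pointwise curvature inequalities on $B$ to intersection numbers on $\olB$) do the real work.
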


\noindent The local Torelli condition for $(\olB,Z;\Phi)$ enables us to realize the restriction of $K_\olB+[Z]$ to $B$ as a line subbundle of the pull-back $\cH = \Phi^*(\bfH)$ of a homogenous subbundle $\bfH \to D$ (which descends to $\GsD$).  Curvature properties of $\bfH$ imply that $c_1(\cH_\mathrm{e})$ is essentially equivalent to $c_1(\Le)$.  The theorem is then deduced by considering the second fundamental form of $K_\olB+[Z] \inj \cH_\mathrm{e}$.

\subsection{The log canonical bundle: ample}

Recall the extensions \eqref{E:extns}.  Let $\fibrezero$ be a connected component of a $\Phizero$--fibre, and let $\fibreone \subset \fibrezero$ be a connected component of a $\Phione$--fibre.

\begin{theorem}[Theorem \ref{T:pg1ample}]\label{iT:pg1ample}
Assume that the local Torelli condition holds for $(\olB,Z;\Phi)$: the bundle map $\Psi : T_\olB(-\log Z) \to \tGr_{\cFe}^{-1}\tEnd(\cEe)$ is injective.   Suppose in addition that the effective cone $\mathrm{Eff}^1(\olB)$ is finitely generated and that the period maps $\left.\Phizero\right|_{Z_W}$ have constant rank.  Then there is a well-defined Gauss map $\cG(\left.\Phione\right|_\fibrezero) : \fibrezero \to \tGr(r_W,\bC^{d_W})$.   The line bundle $K_\olB + [Z]$ is ample if and only if the Gauss map $\cG(\left.\Phione\right|_\fibrezero)$ is locally injective.
\end{theorem}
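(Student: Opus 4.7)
The plan is to combine Theorem \ref{iT:pg1} with a curve-by-curve positivity argument. Since the local Torelli hypothesis already forces $K_\olB + [Z]$ to be nef and big, ampleness reduces, by a Kleiman/Nakai--Moishezon style criterion, to the assertion that $(K_\olB + [Z])\cdot C > 0$ for every irreducible curve $C \subset \olB$. The finite generation of $\mathrm{Eff}^1(\olB)$ plays the usual role of allowing one to test positivity on finitely many extremal classes, while also ensuring that a suitable $\bQ$--adjustment of $\Le$ produces the desired ample class.

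First I would dispose of curves $C$ not contracted by the extension $\Phizero$. For such curves, the embedding $K_\olB + [Z] \inj \cH_\mathrm{e}$ constructed in the proof of Theorem \ref{iT:pg1} together with the curvature positivity of the homogeneous bundle $\bfH \to D$ forces $(K_\olB + [Z])\cdot C > 0$: the second fundamental form of the embedding has a nonzero contribution exactly when the period map moves along $C$, which is the case precisely when $C$ is not $\Phizero$--contracted. This step uses local Torelli in the same way that Theorem \ref{iT:pg1} does.

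Next I would analyze curves $C$ that lie in a connected component $\fibrezero$ of a $\Phizero$--fibre. The constant rank assumption on $\left.\Phizero\right|_{Z_W}$ is what allows one to define the Gauss map $\cG(\left.\Phione\right|_\fibrezero) : \fibrezero \to \tGr(r_W,\bC^{d_W})$ sending $b \in \fibrezero$ to the $r_W$--plane in $\bC^{d_W}$ spanned by the image of $\td\Phione|_b$ in the appropriate graded piece of $\tEnd(\cEe)$ at the limiting mixed Hodge structure indexed by the stratum containing $b$. The key geometric input is that $(K_\olB + [Z])|_\fibrezero$ is (up to a positive rational multiple and torsion) the pull-back of the Plücker line bundle from $\tGr(r_W,\bC^{d_W})$ along this Gauss map. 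Granted this identification, $(K_\olB + [Z])\cdot C > 0$ for $C \subset \fibrezero$ is equivalent to $\cG(\left.\Phione\right|_\fibrezero)$ not contracting $C$, which is in turn equivalent to local injectivity of the Gauss map along $\fibrezero$.

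The main obstacle is the Plücker identification in the last step. Concretely, one must show that the embedding $K_\olB + [Z] \inj \cH_\mathrm{e}$ from Theorem \ref{iT:pg1}, when restricted to $\fibrezero$, factors through the subbundle of $\cH_\mathrm{e}|_\fibrezero$ carved out by the level--one extension data parametrized by $\Phione$, and that its image in the corresponding Grassmannian bundle is exactly the graph of the Gauss map. This is a tangential/second-fundamental-form calculation intertwining the $\Phizero$/$\Phione$ filtration of the Deligne extension with the log tangent bundle $T_\olB(-\log Z)$; local Torelli for $(\olB,Z;\Phi)$ is precisely the hypothesis that identifies $T_\olB(-\log Z)$ with its image under $\Psi$, and hence makes the pull-back of the Plücker class match $c_1(K_\olB + [Z])|_\fibrezero$ up to contributions already controlled by $\Le$ on the non-contracted directions. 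Once this identification is in place, the ``only if'' direction is automatic (any curve contracted by the Gauss map is contracted by $K_\olB + [Z]$), and the ``if'' direction combines with the first step to give ampleness.
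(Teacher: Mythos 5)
Your proposal is correct and follows essentially the same route as the paper's proof: reduction of ampleness to uniform curve-by-curve positivity via finite generation of $\mathrm{Eff}^1(\olB)$ (the paper's Remark \ref{R:eff}), positivity on curves with $\Phizero(C)$ a curve via the curvature comparison $c_1(K_\olB+[Z]) \ge \epsilon\,c_1(\Le) + \tau$ coming from the embedding $(K_\olB+[Z])^* \inj \cH = \Phi^*(\bfH)$, and positivity on $\Phizero$--contracted curves by identifying the second fundamental form $\left.\Upsilon\right|_\fibrezero$ with the differential of the Gauss map (Lemmas \ref{L:VIII.12a} and \ref{L:VIII.12b}). Two small calibrations: on non-contracted curves the positivity comes from the $\epsilon\,c_1(\Le)$ curvature term rather than the second fundamental form --- the term $\tau = \tfrac{\bi}{2\pi}(\Upsilon,\Upsilon)$ is exactly what survives on the fibres $\fibrezero$, where the period map does not move but the extension data does --- and the paper establishes only the curvature identity $\left.c_1(K_\olB+[Z])\right|_\fibrezero = \left.\tau\right|_\fibrezero$ together with $\left.\Upsilon\right|_\fibrezero = \td\,\cG(\left.\Phione\right|_\fibrezero)$, which already suffices for the intersection numbers, so your stronger Pl\"ucker line-bundle pullback identification (the step you flag as the main obstacle) is more than is needed.
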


\begin{remark}\label{R:asms}
Both the assumption that the effective cone $\mathrm{Eff}^1(\olB)$ is finitely generated (appearing in several statements), and the hypothesis that the period maps $\left.\Phizero\right|_{Z_W}$ have constant rank are expected to be unnecessary.  Our goal here is not to prove the optimal results, but to highlight the key geometric ideas underlying the arguments.
\end{remark}

\tableofcontents 

\subsection*{Acknowledgements}

We are indebted to Ben Bakker and Kang Zuo for enlightening conversations and correspondence.

\section{The case that $\tdim\,B=2$} \label{S:dim=2}

\subsection{Proof of Theorem \ref{T:dim=2}} \label{S:prfdim=2}

Consider the case that $B$ is a surface.  If $\wp = \Phi(B)$ is a curve, then \cite{MR0324078} and \cite{MR1273413} imply that $\olPzero$ is algebraic.  One may then show that $\Le \to \olPzero$ is ample (\cf~the argument of \cite[\S6]{GGLR}). 

So we assume that $\wp$ is a surface; equivalently, $\Phi_*$ is injective at some point $b \in B$.  Index the irreducible components $Z_i$ of $Z = Z_1 \cup \cdots \cup Z_\nu$ so that $\Phizero(Z_i)$ is a point if and only if $i \le \mu \le \nu$.
It follows from \cite[Lemma 5.4.20]{GGLR} that it suffices to prove the following: given a curve $C \subset \olB$, we have 
\begin{equation}\label{E:dim=2}
  \Pi \cdot C \ = \ \tdeg\,\left.\Pi\right|_C 
  \ > \ 0 \,.
\end{equation}
Without loss of generality $C$ is irreducible and there are three cases to consider:
\begin{a_list}
\item \label{i:dim2a}
The intersection $C\cap B$ is Zariski open in $C$.  (In which case $C \cap Z$ is a finite set of points.)
\item \label{i:dim2b}
$C = Z_i$ for some $i>\mu$, and $\Phizero(C)$ is a curve.
\item \label{i:dim2c}
$C = Z_i$ for some $i\le\mu$, and $\Phizero(C)$ is a point.  
\end{a_list}
In cases \ref{i:dim2a} and \ref{i:dim2b}, we have $\Le \cdot C > 0$, cf.~\S\ref{S:loctor0}.  We will see that the $a_i$ are determined by the intersection matrix 
\begin{equation}\label{E:A}
  A \ = \ (A_{ij}) \ = \ \Vert Z_i \cdot Z_j \Vert_{i,j=1}^\mu \,.
\end{equation}
Then \eqref{E:dim=2} will follow for $m\gg0$.

In the case \ref{i:dim2c} we have $\Le \cdot C = 0$.  The Hodge Index Theorem implies that $A$ is negative definite \cite[Lemma 3.1.1]{GGLR}.

\begin{lemma}\label{L:A}
Let $A$ be any integral, negative definite symmetric matrix with the property that $A_{ij} \ge0$ when $i\not=j$.  Then $A$ has an eigenvector $a = {}^t(a_1,\ldots,a_\mu)$ with $a_i > 0$.
\end{lemma}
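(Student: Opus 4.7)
The plan is to deduce the lemma from the Perron--Frobenius theorem applied to a suitable scalar shift of $A$. First I would set $c := \max_i (-A_{ii})$, which is positive because every diagonal entry of the negative definite matrix $A$ is negative. Then $B := A + cI$ is a nonnegative symmetric matrix: the off-diagonal entries coincide with those of $A$ and are $\ge 0$ by hypothesis, while the diagonal entries $A_{ii}+c$ are $\ge 0$ by the choice of $c$. Since $A$ and $B$ share their eigenvectors, with an $A$-eigenvalue $\lambda$ corresponding to the $B$-eigenvalue $\lambda+c$, it is equivalent to produce a strictly positive eigenvector of $B$, whose associated $A$-eigenvalue will automatically be $\rho(B)-c<0$ because $A$ is negative definite.

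The main step is Perron--Frobenius. The nonnegative symmetric matrix $B$ has spectral radius $\rho(B)$ as an eigenvalue with a nonnegative eigenvector $a$, and the key obstacle is upgrading $a\ge 0$ to $a>0$. This is automatic when $B$ is \emph{irreducible}, i.e.\ when the undirected graph on $\{1,\dots,\mu\}$ with edge set $\{ij : A_{ij}>0\}$ is connected; then the strong form of the theorem guarantees that the Perron eigenspace is one-dimensional and spanned by a strictly positive vector, completing the proof.

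If $B$ is reducible, then after a simultaneous permutation of rows and columns one has a block decomposition $A = A_1 \oplus \cdots \oplus A_k$ in which each $A_\ell$ is irreducible and inherits the hypotheses of the lemma. The previous paragraph produces a strictly positive eigenvector $a^{(\ell)}$ of $A_\ell$ with eigenvalue $\lambda_\ell < 0$. These assemble into a single eigenvector of $A$ only when the $\lambda_\ell$ happen to coincide, so in general one extracts the desired $a$ via M-matrix theory: $-A$ is a nonsingular symmetric M-matrix, hence $(-A)^{-1}$ is entrywise nonnegative with no zero row, and $a := (-A)^{-1}\mathbf{1}$ is strictly positive with $-Aa = \mathbf{1}$. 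In the intended application to Theorem~\ref{T:dim=2} the blocks correspond to disjoint bunches of curves contracted by $\Phizero$ to distinct points, so the required intersection inequality $\Pi\cdot Z_j>0$ is verified block-by-block and either formulation suffices.
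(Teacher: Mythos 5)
Your irreducible case is correct, but it takes a genuinely different route from the paper. The paper argues variationally: it takes an eigenvector $a$ for the maximal eigenvalue $\alpha<0$, shows via the Rayleigh quotient that ${}^t(|a_1|,\ldots,|a_\mu|)$ is again a maximal eigenvector, and then rules out a zero entry $a_j=0$ by perturbing $a\mapsto a+\epsilon e_j$ and showing the Rayleigh quotient strictly increases --- a step that explicitly invokes ``the connectedness of $Z$'' to produce some $A_{ij}a_i>0$. In effect the paper re-proves, by hand, exactly the piece of Perron--Frobenius theory that you import wholesale via the shift $B=A+cI$; your version is shorter and uses standard machinery, while the paper's is self-contained and its argument is reused verbatim in the proof of Lemma \ref{L:im}.

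The more substantive point is your reducible case, and here you have put your finger on a real defect in the statement rather than introduced one of your own. As you observe, without an irreducibility hypothesis the eigenvector conclusion is simply false (e.g.\ $A=\mathrm{diag}(-1,-2)$ is integral, symmetric, negative definite, with nonnegative off-diagonal entries, yet has no eigenvector with all entries positive), so your M-matrix fallback $a=(-A)^{-1}\mathbf{1}$ --- which satisfies $-Aa=\mathbf{1}>0$ but is \emph{not} an eigenvector --- cannot rescue the literal statement, and you are right not to claim that it does. The paper's own proof has the same gap: connectedness is imported from the geometric setting and silently used, although it appears nowhere in the hypotheses of Lemma \ref{L:A}; worse, in the intended application the curves contracted by $\Phizero$ to \emph{distinct} points are disjoint, so the intersection matrix is typically block-diagonal and genuinely reducible. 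Since the proof of Theorem \ref{T:dim=2} only ever uses the inequalities $\sum_j a_jA_{ij}<0$ with $a_i>0$, either your block-by-block assembly of Perron vectors (cross-block intersection numbers vanish) or the single vector $(-A)^{-1}\mathbf{1}$ delivers what is actually needed. On this point your treatment is more careful than the paper's; the honest fix is to add irreducibility to the lemma, or to restate its conclusion in the M-matrix form you propose.
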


\noindent The lemma is proved below.  Assuming the lemma for the moment, let $\a < 0$ denote the eigenvalue of $a$.  Then 
\[
  \sum_{j=1}^\mu a_j Z_j \cdot Z_i \ = \ \a\,a_i \ < \ 0 \,,\quad i \le \mu \,.
\]
The desired \eqref{E:dim=2} now follows, and the proof is complete.

\begin{remark}
The point here is that a simpler result, such as $m\Le - \sum_i [Z_i]$ is ample, does not hold.  The coefficients are necessary; they reflect a property of the singularity that $Z_i$ is contracted to.
\end{remark}

\begin{proof}[Proof of Lemma \ref{L:A}]
Suppose that $a = (a_1,\ldots,a_\mu)$ is an eigenvalue with maximal eigenvalue $\a < 0$.  We claim that $\hat a = {}^t( |a_1| , \ldots , |a_\mu| )$ is also an eigenvector with maximal eigenvalue $\a$.  To see this note that 
\begin{eqnarray*}
  \sum_{i,j=1}^\mu |a_i| \, A_{ij} \, |a_j| & = & 
  \sum_i |a_i| \, A_{ii} \, |a_i| 
  \ + \  \sum_{i\not=j} |a_i| \, A_{ij} \, |a_j| \\
  & \ge & \sum_i A_{ii} \, (a_i)^2 
  \ + \  \sum_{i\not=j} a_i \, A_{ij} \, a_j 
  \ = \ \a\,\Vert a \Vert^2 \,.
\end{eqnarray*}
So without loss of generality we may suppose that $a_i \ge 0$.  

We further claim that $a_i > 0$ for all $1 \le i \le \mu$.  Suppose that some $a_j=0$.  Set $a_\epsilon = (a_1 + \epsilon \d_{1j} , \ldots , a_\mu + \epsilon \d_{\mu j})$.  Then 
\begin{eqnarray*}
  {}^ta_\epsilon\,A\,a_\epsilon & = & 
  {}^ta \,A\,a \ + \ 2\epsilon \sum_{i=1}^\mu A_{ij}\,a_i
  \ + \ \epsilon^2\, A_{jj} \,.
\end{eqnarray*}
Since $a_j=0$ we have $\Vert a_\epsilon \Vert^2 = \Vert a \Vert^2 + \epsilon^2$.  This implies
\begin{eqnarray*}
  \frac{{}^ta_\epsilon\,A\,a_\epsilon}{\Vert a_\epsilon \Vert^2} 
  & \ge & 
  \frac{{}^ta \,A\,a}{\Vert a \Vert^2} \ + \ 
  \frac{2\epsilon \sum_{i=1}^\mu A_{ij}\,a_i}{\Vert a \Vert^2}
  \ + \ 
  \frac{\epsilon^2\, A_{jj}}{\Vert a \Vert^2} \,.
\end{eqnarray*}
The connectedness of $Z$ implies that some $A_{ij}\,a_i > 0$.  So for $0 < \epsilon \ll 1$ we have 
\[
  \frac{{}^ta_\epsilon\,A\,a_\epsilon}{\Vert a_\epsilon \Vert^2} 
  \ > \  
  \frac{{}^ta \,A\,a}{\Vert a \Vert^2} \,,
\]
contradicting the maximality of $\a<0$.  Thus, $a_i > 0$ for all $1 \le i \le \mu$.
\end{proof}

\subsection{Remark on negative definiteness of $A$}

In the proof of Theorem \ref{T:dim=2} we invoked the Hodge Index Theorem to conclude that the matrix \eqref{E:A} is negative definite.  Alternatively, one may show (without the Hodge Index Theorem) that \eqref{E:A} is negative definite if the conormal bundle is ample.  To be precise, suppose that $X$ is a surface and that $Z_i \subset X$ are smooth curves forming a normal crossing divisor $Z = Z_1 \cup \cdots \cup Z_\mu$ 

\begin{lemma} \label{L:im}
If $\cN_{Z/X}^* \to Z$ is ample, then the intersection matrix $A = \Vert Z_i \cdot Z_j \Vert_{i,j=1}^\mu$ is negative definite.
\end{lemma}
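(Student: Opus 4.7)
The plan is to recast the ampleness hypothesis as a strict diagonal dominance condition on $A$, and then deduce negative definiteness by Gershgorin.

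First, since the components $Z_1,\ldots,Z_\mu$ are smooth curves meeting transversely (as a normal crossing divisor), two distinct components meet in a finite set of nodes, so
\[
  A_{ij} \ = \ Z_i \cdot Z_j \ \ge \ 0 \quad \text{for all } i\neq j.
\]
Next, the conormal bundle is the line bundle $\cN_{Z/X}^* = \cO_X(-Z)|_Z$ on the (reducible) curve $Z$. By the Nakai--Moishezon criterion on $Z$, its ampleness is equivalent to having strictly positive degree on every irreducible component; that is,
\[
  0 \ < \ \tdeg\bigl(\cO_X(-Z)|_{Z_i}\bigr)
  \ = \ -\,Z\cdot Z_i
  \ = \ -\sum_{j=1}^\mu A_{ij}
  \quad\text{for every } i=1,\dots,\mu.
\]
Combining the two displays gives $A_{ii} < -\sum_{j\neq i} A_{ij} \le 0$, so $A_{ii}<0$ and
\[
  |A_{ii}| \ = \ -A_{ii} \ > \ \sum_{j\neq i} A_{ij}
  \ = \ \sum_{j\neq i}|A_{ij}|.
\]
Thus the symmetric matrix $A$ is strictly diagonally dominant with strictly negative diagonal.

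Finally, I will conclude by Gershgorin's disc theorem: every eigenvalue of $A$ lies in some disc centered at $A_{ii}<0$ of radius $\sum_{j\neq i}|A_{ij}|<|A_{ii}|$, and each such disc is contained in the open left half-plane. Hence all eigenvalues of $A$ are negative, so $A$ is negative definite. There is essentially no obstacle here; the only point to check carefully is that ``$\cN_{Z/X}^*$ ample'' is interpreted componentwise via Nakai--Moishezon on the reducible curve $Z$, which is what produces the strict row-sum inequality driving the argument.
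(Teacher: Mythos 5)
Your proof is correct, and its core linear-algebra step is genuinely different from the paper's. The first move is the same in both: you translate ampleness of $\cN_{Z/X}^* = \cO_X(-Z)|_Z$ into the componentwise degree condition $\sum_j Z_j \cdot Z_i = \sum_j A_{ij} < 0$ for every $i$ (the paper simply asserts this as ``the condition that $\cN_{Z/X}^*$ be ample''; your appeal to Nakai--Moishezon on the reducible curve supplies the justification, and is the right reading). From there the paper runs a Perron--Frobenius-type argument: it imports from the proof of Lemma \ref{L:A} the construction of a nonnegative eigenvector $(a_1,\ldots,a_\mu)$ for the maximal eigenvalue $\alpha$, and then combines the row-sum condition with $A_{ij}\ge 0$ ($i\ne j$) and maximality of the top entry $a_1$ to force $\alpha\,a_1<0$, hence $\alpha<0$. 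You instead observe that the row-sum condition together with nonnegative off-diagonal entries is exactly strict diagonal dominance with negative diagonal, and conclude by Gershgorin. Your route is more elementary and fully self-contained: it avoids the eigenvector machinery entirely, and in particular does not touch the connectedness of $Z$ that the paper's Lemma \ref{L:A} uses to get strict positivity of the eigenvector. What the paper's approach buys is the positive eigenvector itself, which is the object it actually needs elsewhere: in the proof of Theorem \ref{T:dim=2} the coefficients $a_i$ of the ample combination $m\Le - \sum a_i[Z_i]$ come from precisely that eigenvector, so reusing the argument keeps the two lemmas parallel, whereas your argument certifies negative definiteness without producing the $a_i$.
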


\begin{proof}
Let $\a$ be a maximal eigenvector; we wish to show that $\a < 0$.  The argument of Lemma \ref{L:A} applies here to give us an eigenvector $(a_1,\ldots,a_\mu)$ with $a_i > 0$.

The condition that $\cN_{Z/X}^* \to Z$ be ample is 
\[
  \sum_{i=1}^\mu Z_i \cdot Z_j \ < \ 0
  \quad \forall \ 1 \le j \le \mu \,.
\]
We have 
\[
  \sum_{i=1}^\mu a_i\, Z_i \cdot Z_j \ = \ \a\,a_j
  \quad \forall \ 1 \le j \le \mu \,.
\]
Without loss of generality the $Z_i$ are indexed so that $a_1 \ge a_i$.  Then
\[
  \sum_{i=1}^\mu a_i Z_i \ = \ 
  a_1 \sum_{i=1}^\mu Z_i \ - \ \sum_{i=2}^\mu (a_1-a_i) Z_i \,,
\]
so that 
\[
  \a\,a_1 \ = \ \sum_{i=1}^\mu a_i\, Z_i \cdot Z_1
  \ = \ 
  a_1 \sum_{i=1}^\mu Z_i \cdot Z_1 
  \ - \ \sum_{i=2}^\mu (a_1-a_i) Z_i \cdot Z_1 \,.
\]
Since $Z_i \cdot Z_1 \ge 0$ for all $i \ge 2$, our ampleness hypothesis implies that $\a\,a_1 < 0$.
\end{proof}

\begin{remark}
The converse to Lemma \ref{L:im} does not hold.  For example, consider
\[
  A \ = \ \left[ \begin{array}{rr}
      -5 & 2 \\ 2 & -1
  \end{array} \right] \,.
\]
\end{remark}

\subsection{Constraints on variations over $Z$}

We finish our discussion of the $\tdim\,B=2$ case with the discussion of a constraint on the variations of nilpotent orbits that may arise; the result is an application of the central geometric information \cite[(4.5)]{GGRhatPT} that arises when studying the variation along $\fibrezero$.

\begin{proposition} \label{P:dim=2}
Assume that $\tdim\,B=2$ and that $\Phi : B \to \GsD$ satisfies generic local Torelli (equivalently, $\Phi_*$ is injective at some point $b \in B$, so that $\tdim\,\wp=2$).  Then $\Phione$ is necessarily non-constant on some irreducible component $Z_i$ of $Z$.
\end{proposition}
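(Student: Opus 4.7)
I would argue by contradiction: suppose that $\Phione|_{Z_i}$ is constant for every irreducible component $Z_i$ of $Z$. Since $Z$ is connected and $\Phione$ is continuous, the image $\Phione(Z)$ is a single point, hence $\Phizero(Z)$ is a single point as well. Consequently $Z$ lies in a single connected component $\fibrezero$ of a $\Phizero$-fiber, and moreover in a single connected component $\fibreone \subseteq \fibrezero$ of a $\Phione$-fiber, with $Z \subseteq \fibreone$.

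The main step is to invoke the central geometric identity \cite[(4.5)]{GGRhatPT}, which describes, for the variation of nilpotent orbits along a fiber component $\fibrezero$ of $\Phizero$, how the differential $\td\Phi$ at points of $B$ near $\fibrezero$ decomposes into (i) contributions from $\td\Phione$ along $\fibrezero$ and (ii) contributions from the transverse directions into $B$ recorded by the LMHS. Under the contradiction hypothesis $\td\Phione|_{TZ}\equiv 0$, that identity kills the along-$Z$ contribution entirely, leaving only the one-dimensional transverse contribution (since $Z$ has codimension one in the surface $\olB$). The conclusion is that $\td\Phi$ has rank at most $1$ on some open neighborhood $U$ of $Z$ in $\olB$.

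To finish, generic local Torelli supplies a point $b\in B$ at which $\td\Phi_b$ has rank $2$; by lower semicontinuity of corank, the rank-$2$ locus is Zariski open in $B$, and since $B$ is irreducible it is dense. The dense rank-$2$ locus must then meet the non-empty open set $U\cap B$, contradicting the rank-$\le 1$ bound from the previous step. Hence $\Phione$ must be non-constant on some $Z_i$.

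The main obstacle is the precise invocation of \cite[(4.5)]{GGRhatPT}: the geometric content of that identity is exactly what converts constancy of $\Phione$ along $Z$ into a rank bound for $\td\Phi$ on a neighborhood of $Z$, and the surface hypothesis $\tdim\,B=2$ is used to ensure that the one remaining transverse direction cannot yield rank $2$. Once the identity is set up correctly, the density/semicontinuity step that produces the contradiction is routine.
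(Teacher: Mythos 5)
Your setup (contradiction hypothesis, constancy of $\Phione$ hence $\Phizero$ along all of $Z$, so $Z$ is a single fibre component $\fibrezero$) matches the paper, but the main step rests on a misreading of \cite[(4.5)]{GGRhatPT}, and the argument you build on it is false. That identity is not a decomposition of the differential $\td\Phi$ at points of $B$ near $\fibrezero$; it is a \emph{line bundle} identity on the fibre: the pullback under the level-one extension data map of a natural line bundle $\sL_M$ on the compact torus $T_W$ of \cite[Theorem 4.3]{GGRhatPT} satisfies $(\Phione|_Z)^*(\sL_M) = \sum_i \kappa(M,N_i)\,[Z_i]|_Z$. In particular it says nothing about the rank of $\td\Phi$ on a neighborhood $U$ of $Z$, and the rank bound you extract from it is simply not true: constancy of the limiting data along $Z$ does not propagate to points of $B$ off $Z$. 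For instance, with $B = \Delta^* \times \Delta$ one can have a nilpotent orbit whose boundary data ($\Phizero$ and the level-one extension classes) are constant along $\{z_1 = 0\}$ while the holomorphic corrections carry $z_1 z_2$-terms, so that $\td\Phi$ has rank $2$ at every point with $z_1 \neq 0$. There is no \emph{local} obstruction of the kind your argument requires; the proposition is genuinely global, and your proof never uses projectivity of $\olB$, which is a sign something is missing.

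The paper's actual route is intersection-theoretic. Since $\Phione|_Z$ is constant, the pullback $(\Phione|_Z)^*(\sL_M)$ is trivial, so the divisor class $\sum_i \kappa(M,N_i)[Z_i]$ restricts trivially to $Z$ and hence has self-intersection zero: $\sum_{i,j} \kappa(M,N_i)\kappa(M,N_j)\, Z_i \cdot Z_j = 0$. This is where generic local Torelli enters — not through density of the rank-$2$ locus, but because it makes $\Le$ big (and nef), while $\Le \cdot Z_i = 0$ for the contracted components, so the Hodge Index Theorem forces the intersection matrix $\Vert Z_i \cdot Z_j \Vert$ to be negative definite \cite[Lemma 3.1.1]{GGLR}. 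Negative definiteness then forces $\kappa(M,N_i) = 0$ for all $i$ and all $M$, contradicting the nonvanishing statement \cite[(4.18)]{GGRhatPT}. To repair your proof you would need to replace the neighborhood rank bound by this global positivity/negativity mechanism; the semicontinuity-and-density finish is then unnecessary.
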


\begin{definition} \label{dfn:ht}
The variation $(\cW^I,\left.\cFe\right|_{Z_I^*})$ of limiting mixed Hodge structure along $Z_I^*$ is of \emph{Hodge--Tate type}, if the associated graded variation $\cFe^p(\tGr^{\cW^I}_a)$ of Hodge structure is Hodge--Tate.
\end{definition}

\begin{remark} \label{R:ht}
When the variation is of Hodge--Tate type, both the period map $\Phizero$ and the level one extension data map $\Phione$ are locally constant along $Z_I^*$.  
\end{remark}

\begin{corollary}\label{C:dim=2}
Suppose that $B$ is a surface and that the LMHS along all of $Z$ is of Hodge--Tate type.  Then $\tdim\,\wp \le 1$.
\end{corollary}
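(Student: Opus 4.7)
The plan is to argue by contradiction using Proposition \ref{P:dim=2}, combined with the rigidity statement in Remark \ref{R:ht} that the Hodge--Tate hypothesis forces $\Phione$ to be locally constant on each boundary stratum $Z_I^*$.

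Suppose, for contradiction, that $\tdim\,\wp = 2$. Since $\tdim\,B = 2$, this forces $\Phi_*$ to have rank two at some point $b \in B$, which is precisely the generic local Torelli assumption needed to invoke Proposition \ref{P:dim=2}. That proposition then produces an irreducible component $Z_i$ of $Z$ on which $\Phione$ is non-constant.

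On the other hand, the hypothesis that the LMHS along all of $Z$ is of Hodge--Tate type applies to every stratum $Z_I^* \subset Z_i$. By Remark \ref{R:ht}, this implies that $\Phione$ is locally constant on each $Z_I^*$. To conclude that $\Phione$ is constant on all of $Z_i$, I would invoke the continuity of $\Phione$ on the (proper, analytic) loci $Z_W$ recalled in the discussion of \eqref{E:extns}: combining local constancy on each stratum with continuity and the fact that $Z_i$ is irreducible (hence connected), $\Phione$ must be constant on $Z_i$. This contradicts the output of Proposition \ref{P:dim=2}, so $\tdim\,\wp \le 1$.

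The main technical point to verify carefully is the final connectedness/continuity step: one must check that the locally constant behavior on individual strata $Z_I^*$ actually glues across the stratification of the single irreducible component $Z_i$. This is standard given the description of $\Phione$ as a continuous extension whose restriction to each $Z_W$ is analytic, but it is the only place where something non-formal happens; the rest of the argument is a direct contrapositive application of Proposition \ref{P:dim=2} to the Hodge--Tate hypothesis.
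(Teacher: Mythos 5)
Your proof is correct and follows the paper's intended route exactly: the corollary is stated there without a separate proof, as an immediate consequence of Proposition \ref{P:dim=2} and Remark \ref{R:ht}, and your contradiction argument is precisely that combination. Your final step upgrading local constancy on strata to constancy on all of $Z_i$ (via the connected, dense open stratum of the irreducible $Z_i$ together with continuity of $\Phione$ into the Hausdorff space $\olPone$) correctly supplies the one detail the paper leaves implicit.
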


\begin{proof}[Proof of Proposition \ref{P:dim=2}]
We argue by contradiction.  Suppose that $\Phione$ is constant along all of $Z$.  Then $\Phizero$ is necessarily constant along all of $Z$; that is, $Z = \fibrezero$, and $\mu = \nu$.  Since $\Phione(Z) = \Phione(\fibrezero)$ is a point in the compact torus $T_W$ of \cite[Theorem 4.3]{GGRhatPT}, it follows from \cite[(4.5)]{GGRhatPT} that 
\[
  (\left.\Phione\right|_Z)^*(\sL_M)
  \ = \ \sum_{i=1}^\nu \left. \kappa(M,N_i)[Z_i]\right|_Z
  \quad\hbox{is trivial.}
\]
So 
\begin{eqnarray*}
  0 & = & \left( \sum_{i=1}^\nu \kappa(M,N_i)[Z_i] \right)^2 
  \ = \ \sum_{i,j=1}^\nu \kappa(M,N_i) \kappa(M,N_j) \, Z_i \cdot Z_j
\end{eqnarray*}
The negative definiteness of \eqref{E:A} forces $\kappa(M,N_i) = 0$ for all $i$.  As $M$ is arbitrary, this contradicts \cite[(4.18)]{GGRhatPT}.
\end{proof}

\section{Geometric properties of $K_{\olB} + [Z]$} \label{S:K+Z}

The purpose of this section is to establish conditions under which $K_{\olB}+[Z]$ is semi-ample and ample.  The principle assumption is a local Torelli condition on the triple $(\olB,Z;\Phi)$ (Definition \ref{dfn:ltc-trip}).

\begin{theorem}\label{T:pg1}
Assume that the local Torelli condition holds for $(\olB,Z;\Phi)$: the bundle map $\Psi : T_\olB(-\log Z) \to \tGr_{\cFe}^{-1}\tEnd(\cEe)$ is injective.  Then the line bundle $K_\olB + [Z]$ is nef and big. 
\end{theorem}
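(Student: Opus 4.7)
The plan is to follow the sketch indicated in the introduction: realize $K_\olB+[Z]$ as a line sub-bundle of the pull-back of a homogeneous bundle from $D$, then use a second-fundamental-form/curvature argument together with Hodge-theoretic positivity.

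First I would take top exterior powers of the hypothesized injection $\Psi:T_\olB(-\log Z)\hookrightarrow\tGr_{\cFe}^{-1}\tEnd(\cEe)$. Writing $b=\tdim B$, this produces an inclusion of line bundles
\[
  \det\Psi\;:\;(K_\olB+[Z])^{-1} \;=\; \wedge^b T_\olB(-\log Z) \;\hookrightarrow\; \wedge^b\tGr_{\cFe}^{-1}\tEnd(\cEe) \;=:\; \cH_\mathrm{e}.
\]
Dually, $\cH_\mathrm{e}^{*}\twoheadrightarrow K_\olB+[Z]$, so $K_\olB+[Z]$ is realized as a quotient (equivalently, a line subbundle of $\cH_\mathrm{e}^*$). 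On $B$, the bundle $\cH_\mathrm{e}$ is the pull-back $\Phi^*\bfH$ of the homogeneous bundle $\bfH=\wedge^b\tGr_F^{-1}\tEnd(V)\to D$ that descends to $\GsD$.

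Second, I would compute curvature on the period domain side. Using the explicit $G_\bR$-invariant Hodge decomposition $\tGr_F^{-1}\tEnd(V)=\bigoplus_p\tHom(F^p/F^{p+1},F^{p-1}/F^p)$ and the associated $G_\bR$-invariant Hermitian structure, the first Chern form of $\bfH$ is an explicit linear combination of the Chern forms of the graded Hodge bundles; a direct bookkeeping (as sketched in the introduction) shows that $c_1(\cH_\mathrm{e})$ is, up to a positive rational multiple and terms handled by the extension, equivalent to $c_1(\Le)$. The structure of $\bfH$ as a homogeneous bundle, combined with the Griffiths--Schmid negative horizontal holomorphic sectional curvature of $D$, moreover shows that the ambient curvature of $\cH_\mathrm{e}$ restricted to the horizontal sub-direction singled out by $\det\Psi$ is non-positive.

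Third, I would apply the Gauss--Chern formula to the sub-line-bundle $(K_\olB+[Z])^{-1}\hookrightarrow\cH_\mathrm{e}$ with the induced Hodge metric. The formula reads
\[
  \Theta_{(K_\olB+[Z])^{-1}} \;=\; \Theta_{\cH_\mathrm{e}}\big|_{\mathrm{sub}} \;-\; |A|^2,
\]
where $A$ is the second fundamental form. The previous step gives $\Theta_{\cH_\mathrm{e}}|_\mathrm{sub}\le 0$, and $|A|^2\ge 0$ always, so $\Theta_{(K_\olB+[Z])^{-1}}\le 0$, equivalently $\Theta_{K_\olB+[Z]}\ge 0$. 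This exhibits a semi-positive Hermitian metric on $K_\olB+[Z]$, hence nef-ness. For bigness: the local Torelli hypothesis implies generic local Torelli, whence $\Le$ is big; combining the Chern-class comparison $c_1(\cH_\mathrm{e})\sim c_1(\Le)$ with the non-negative contribution $|A|^2$ in the above formula, a top self-intersection calculation gives $(K_\olB+[Z])^{b}>0$, so $K_\olB+[Z]$ is big.

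The main obstacle is controlling these curvature calculations across the boundary divisor $Z$. The Hodge/Schmid metric and the embedding $\det\Psi$ both have logarithmic singularities along $Z$ coming from the Deligne extension, and the semi-positive (1,1)-form produced by Gauss--Chern on $B$ must be shown to extend as a closed positive current on $\olB$ representing $c_1(K_\olB+[Z])$ (equivalently, intersecting non-negatively with every curve in $\olB$). Making the Chern-class identity $c_1(\cH_\mathrm{e})\sim c_1(\Le)$ precise in the extended setting, and verifying that the logarithmic singularities of the Hodge metric conspire with the $[Z]$ summand of the log canonical bundle rather than obstructing positivity, is the delicate technical heart of the argument.
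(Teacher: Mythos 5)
Your architecture is the paper's: dualize $\Psi$ to realize $(K_\olB+[Z])^* = \tw^{\tdim\,B}T_\olB(-\log Z)$ as a line subbundle of $\cH = \tw^{\tdim\,B}\tGr^{-1}_{\cFe}\tEnd(\cEe) = \Phi^*(\bfH)$ over $B$, compare $c_1(\cH)$ with $c_1(\Le)$ by homogeneity (this is Lemma \ref{L:VIII.10}), and run the Gauss--Chern formula with the second fundamental form (this is \eqref{E:Up}). The nef part of your argument is fine in spirit. But there is a genuine gap at the bigness step. The comparison $c_1(\cH_\mathrm{e})\sim c_1(\Le)$ controls only the \emph{trace} $\mathrm{tr}\,\Theta_\cH$, i.e.\ the sum of all diagonal curvature entries, whereas your Gauss--Chern formula involves the single diagonal entry $\left.\Theta_\cH\right|_{(K_\olB+[Z])^*}$. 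You only establish that this entry is non-positive, and $|A|^2\ge 0$ can vanish; non-positivity of one diagonal entry together with strict negativity of the trace does not force that particular entry to be strictly negative --- a priori the curvature in the sub-line direction could degenerate along horizontal directions while the other summands of $\cH$ carry the negativity of the trace. Consequently your claimed top self-intersection $(K_\olB+[Z])^{\tdim\,B}>0$ does not follow from what you have proved.

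The missing ingredient is the pointwise Lie-algebra computation in the paper's proof of Lemma \ref{L:VIII.12a} (\S\ref{S:prf12a}): using the IPR, identify $T_bB$ with an \emph{abelian} subalgebra $\fb\subset\fg^{-1,1}_\varphi$, let $\eta_1$ span the line $\tw^{\tdim\,B}\fb$, and compute
\[
  h(\Theta_\cH\cdot\eta_1,\eta_1) \ = \ -\,h\bigl(\overline\vartheta_1\cdot\eta_1\,,\,\overline\vartheta_1\cdot\eta_1\bigr) \ < \ 0\,,
\]
where abelian-ness kills $\vartheta_1\cdot\eta_1$ and the nonvanishing of $[\overline\vartheta_1,\vartheta_1]$ forces $\overline\vartheta_1\cdot\eta_1\neq 0$; compactness of the Grassmannian $\tGr(\tdim\,B,\fg^{-1,1}_\varphi)$ then yields a \emph{uniform} $\epsilon>0$ with $-\tfrac{\bi}{2\pi}\left.\Theta_\cH\right|_{(K_\olB+[Z])^*}\ \ge\ \epsilon\,c_1(\Le)$ on horizontal directions. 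This gives $c_1(K_\olB+[Z])\ \ge\ \epsilon\,c_1(\Le)+\tau$, and bigness follows from bigness of $\Le$ (your hypothesis implies generic local Torelli via Lemma \ref{L:ltc2}). Note also that the boundary issue you single out as the ``delicate heart'' is real but is handled in the paper by the established current-theoretic extension results of Cattani--Kaplan--Schmid; the genuinely novel step you would need to supply is the strict sub-direction negativity above, not the extension across $Z$.
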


\begin{proof}[Outline of proof]
\begin{num.list}
\item
The Hodge line bundle $\Le$ is nef \eqref{E:HLBnef}.  And it is big when the period map $\Phi$ satisfies generic local Torelli, \cite[\S5.3.2]{GGRhatPT}).
\item
Local Torelli for $(\olB,Z;\Phi)$ implies local Torelli for $\Phi$ (Lemma \ref{L:ltc2}).
\item 
Lemma \ref{L:VIII.12a} which asserts that there exists a positive constant $\epsilon$ so that $c_1(K_\olB + [Z]) \ge \epsilon c_1(\Le)$.
\end{num.list}

\noindent
The local Torelli condition for $(\olB,Z;\Phi)$ enables us to realize the restriction of $K_\olB+[Z]$ to $B$ as a line subbundle of the pull-back $\cH = \Phi^*(\bfH)$ of a homogenous subbundle $\bfH \to D$ (which descends to $\GsD$).  Curvature properties of $\bfH$ imply that $c_1(\cH_\mathrm{e})$ is essentially equivalent to $c_1(\Le)$ (Lemma \ref{L:VIII.10}).  Lemma \ref{L:VIII.12a} is then deduced by considering the second fundamental form of $K_\olB+[Z] \inj \cH_\mathrm{e}$.
\end{proof}


\begin{theorem}\label{T:pg1ample}
Assume that the local Torelli condition holds for $(\olB,Z;\Phi)$: the bundle map $\Psi : T_\olB(-\log Z) \to \tGr_{\cFe}^{-1}\tEnd(\cEe)$ is injective.   Suppose in addition that the effective cone $\mathrm{Eff}^1(\olB)$ is finitely generated and that the restriction $\Phizero_W$ of $\Phizero$ to $Z_W$ has constant rank.  Then there is a well-defined Gauss map $\cG(\left.\Phione\right|_\fibrezero) : \fibrezero \to \tGr(r_W,\bC^{d_W})$.   The line bundle $K_\olB + [Z]$ is ample if and only if the Gauss map $\cG(\left.\Phione\right|_\fibrezero)$ is locally injective.
\end{theorem}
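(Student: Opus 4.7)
The plan is to leverage Theorem~\ref{T:pg1}, which under the local Torelli hypothesis already yields that $K_\olB+[Z]$ is nef and big, and to upgrade it to ampleness by checking strict positivity on extremal curves. Since $\mathrm{Eff}^1(\olB)$ is finitely generated, Kleiman's criterion (in its cone-theoretic form) reduces ampleness to verifying $(K_\olB+[Z])\cdot C>0$ on each of the finitely many extremal irreducible curves $C$. The converse direction is the easier half: if $\cG(\Phione|_\fibrezero)$ fails to be locally injective on some arc, I will produce (after standard deformation/bend-and-break arguments afforded by finite generation of $\mathrm{Eff}^1(\olB)$) a curve $C\subset\fibrezero$ with $(K_\olB+[Z])\cdot C=0$, precluding ampleness.

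The first substantive step is to localize the potential failure loci. Lemma~\ref{L:VIII.12a} gives a pointwise inequality $c_1(K_\olB+[Z])\ge \epsilon\,c_1(\Le)$ at the curvature level, and $\Le$ is nef on $\olB$. Hence any curve $C$ with $(K_\olB+[Z])\cdot C=0$ must satisfy $\Le\cdot C=0$; invoking Remark~\ref{R:BBT} and the structure of the extensions \eqref{E:extns}, such a $C$ is contracted by $\Phizero$ and therefore lies in a $\Phizero$-fibre component $\fibrezero$. The constant-rank assumption on $\Phizero|_{Z_W}$ ensures the $\fibrezero$ are equidimensional of the right dimension and gives a well-defined rank $r_W$ for $\Phione|_\fibrezero$, so that the Gauss map $\cG(\Phione|_\fibrezero):\fibrezero\to\tGr(r_W,\bC^{d_W})$ is well-defined.

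The second step, which is the heart of the argument, is to refine the curvature comparison. In the proof sketch of Theorem~\ref{T:pg1} the bundle $K_\olB+[Z]$ is realized as a line subbundle of $\cH_\mathrm{e}=\Phi^*(\bfH)_\mathrm{e}$; the second fundamental form $\sigma$ of this inclusion contributes the extra positivity beyond $\epsilon\,c_1(\Le)$. On a curve $C\subset\fibrezero$ the term $\epsilon\,c_1(\Le)$ restricts to zero, so $(K_\olB+[Z])\cdot C$ equals (up to a positive multiple) the $L^2$-norm squared of $\sigma|_C$. Unwinding the identifications from \cite{GGRhatPT} identifying the level one extension data along $\fibrezero$ with the Gauss-type variation of $\Phione|_\fibrezero$, I expect to identify $\sigma|_\fibrezero$ with the differential of $\cG(\Phione|_\fibrezero)$. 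Local injectivity of the Gauss map then gives $\sigma|_C\neq 0$ pointwise on every irreducible $C\subset\fibrezero$, so $(K_\olB+[Z])\cdot C>0$; combined with the previous paragraph, this proves ampleness.

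The main obstacle will be the precise identification of the second fundamental form of $K_\olB+[Z]\hookrightarrow \cH_\mathrm{e}$ restricted to $\fibrezero$ with the differential of the Gauss map $\cG(\Phione|_\fibrezero)$. This requires carefully tracking the limiting mixed Hodge structure data carried by $\Phione$ (the level one extension data of \cite[\S2]{GGRhatPT}) through Deligne's extension and the homogeneous description $\bfH\to D$, and using the constant-rank hypothesis on $\Phizero|_{Z_W}$ to control how $r_W$ enters the target Grassmannian. A secondary technical point is verifying that the extremal curves supplied by finite generation of $\mathrm{Eff}^1(\olB)$ which violate ampleness must actually meet a single $\fibrezero$ rather than straddle several, so that the Gauss-map criterion applies cleanly.
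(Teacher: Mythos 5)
Your forward implication is, in all essentials, the paper's own proof (\S\ref{S:prfTpg1}): nef and big come from Lemma \ref{L:VIII.12a} together with \S\ref{S:loctor0}; a curve with $(K_\olB+[Z])\cdot C=0$ is forced to have $\Le\cdot C=0$, hence $\Phizero(C)$ is a point and $C$ lies in a single fibre component $\fibrezero$ (this is exactly the paper's Case 1/Case 2 dichotomy; your ``secondary technical point'' about curves straddling several components is vacuous, since an irreducible curve contracted by $\Phizero$ is connected and so sits in one connected component of one fibre); and on such curves the degree of $K_\olB+[Z]$ is $\int_C \tau$ with $\tau = \tfrac{\bi}{2\pi}(\Upsilon,\Upsilon)$, by \eqref{E:tauon0}. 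The step you flag as the ``main obstacle'' --- identifying the second fundamental form restricted to $\fibrezero$ with the differential of $\cG(\left.\Phione\right|_\fibrezero)$ --- is precisely the paper's Lemma \ref{L:VIII.12b}, proved in \S\ref{S:prf12b} by a short frame computation: the section $\eta_1$ spanning $(K_\olB+[Z])^* = \tw^{\tdim\,B}T_\olB(-\log Z)\subset\cH$ defines $[\eta_1]:\olB\to\bP(\tw^{\tdim\,B}T_\olB(-\log Z))$, its derivative $\sum_{a\ge2}\theta^a_1\,\eta_a$ modulo $(K_\olB+[Z])^*$ \emph{is} $\Upsilon$, and on $\fibrezero$, after translating tangent spaces in the compact torus $T_W$ of \cite[Theorem 4.3]{GGRhatPT}, this is the differential of \eqref{E:Gauss}. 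So your skeleton is the right one, with the crux correctly located but left unproved --- acceptable for a plan of the ``if'' direction.

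The genuine gap is your converse. Bend-and-break is not available here: Mori's machinery requires $K$-negative curves (or characteristic-$p$ reduction) and produces rational curves, and neither the finite generation of $\mathrm{Eff}^1(\olB)$ nor anything else in the hypotheses supplies a deformation theory of curves inside $\fibrezero$; moreover, mere failure of local injectivity of a holomorphic map at a point does not ``deform'' into a contracted curve --- if $\td\,\cG$ merely drops rank at isolated points, your mechanism produces nothing, yet a proof of the stated equivalence must handle this. The paper's converse is soft and needs no such input: by \eqref{E:c1on0} and \eqref{E:tauon0}, $\left.c_1(K_\olB+[Z])\right|_\fibrezero = \left.\tau\right|_\fibrezero$, and by Lemma \ref{L:VIII.12b} this form degenerates exactly in the directions killed by $\td\,\cG(\left.\Phione\right|_\fibrezero)$; failure of local injectivity yields a curve $C\subset\fibrezero$ along which the Gauss map is constant (this is the content of the parenthetical ``locally injective $\Leftrightarrow$ finite-to-one'' in Lemma \ref{L:VIII.12b}, illustrated by Example \ref{eg:Zuo}, where the Gauss map is constant on the abelian-variety fibres $J_I$ and the log canonical bundle is indeed non-ample), whence $(K_\olB+[Z])\cdot C = \int_C\tau = 0$ and ampleness fails --- no extremal-ray or deformation input whatsoever. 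One small recalibration: in the paper, finite generation of $\mathrm{Eff}^1(\olB)$ serves only to make the strict positivity on all irreducible curves uniform (Remark \ref{R:eff}); your Kleiman-on-finitely-many-extremal-classes reduction is the same device and is fine for the forward direction, but it cannot be recycled to manufacture the curve needed in the converse.
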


\noindent The Gauss map is defined in \eqref{E:Gauss}. 

\begin{remark}\label{R:eff}
The assumption that $\mathrm{Eff}^1(\olB)$ is finitely generated is expected to be unnecessary (Remark \ref{R:asms}).  Here it is a technical convenience: we will show that for each curve $C \subset \olB$, there exists $m_0(C)$ so that $(m\Le-[Z]) \cdot C > 0$ for all $m \ge m_0(C)$.  Finite generation of $\mathrm{Eff}^1(\olB)$ allows us to assume that $m_0(C)$ is independent of $C$.  This will suffice to establish ampleness.
\end{remark}
 
\begin{proof}[Outline of proof]
The proof of Theorem \ref{T:pg1ample} takes off from the end of that for Theorem \ref{T:pg1}.  What remains is to show that $c_1(K_\olB+[Z])$ is positive if and only if the differential of the Gauss map is injective; this is \eqref{E:tau} and Lemma \ref{L:VIII.12b}.  See \S\ref{S:prfTpg1}.
\end{proof}


There is an interesting subtlety in Theorem \ref{T:pg1}, as illustrated by the following\footnote{We are indebted to Kang Zuo for bringing Example \ref{eg:Zuo} to our attention.}

\begin{example}\label{eg:Zuo}
Let $\overline\cA_g{}^\mathsf{T}$ be a toroidal compactification of the moduli space $\cA_g$ of principally polarized abelian varieties.  Then $K_{\overline\cA_g{}^\mathsf{T}} + [\mathrm{bd}^\mathsf{T}]$ is not ample (Theorem \ref{T:mum}).  The point here is that, following \cite{MR471627}, we assume that $\overline\cA_g{}^\mathsf{T}$ is smooth, and that $Z = \mathrm{bd}^\mathsf{T}$ is a local normal crossing divisor.\footnote{The actual singularities of $\overline\cA_g{}^\mathsf{T}$ are mild quotient singularities that do not affect the argument.  Similarly, the fact that $Z$ is only a local, rather than a global, normal crossing divisor makes no essential difference.}  If $\overline\cA_g{}^\mathsf{S}$ is the Satake--Baily--Borel compactification of $\cA_g$, then the natural map
\begin{equation}\label{E:zuo}
  \pi : \overline\cA_g{}^\mathsf{T} \ \to \ \overline\cA_g{}^\mathsf{S}
\end{equation}
is a resolution of the singularities of $\overline\cA_g{}^\mathsf{S}$.  There is an ample line bundle $\cO_{\overline\cA_g{}^\mathsf{S}}(1) \to \overline\cA_g{}^\mathsf{S}$ satisfying $K_{\overline\cA_g{}^\mathsf{T}} + Z = \pi^*(\cO_{\overline\cA_g{}^\mathsf{S}}(1))$ \cite[(3.4)]{MR471627}.  The fibres of \eqref{E:zuo} can be identified with the abelian varieties $J_I$ of \cite[Theorem 4.3]{GGRhatPT}; consequently, the Gauss map \eqref{E:Gauss} of $\Phione$ on these fibres is constant.
\end{example}

\subsection{Local Torelli for the period map and the Chern form $c_1(\Le)$} \label{S:loctor0}

The Chern form $c_1(\Lambda) \in \cA^{1,1}_\olB$ of the Hodge line bundle $\Lambda \to B$ has the property that 
\begin{equation}\label{E:HLBnef}
  c_1(\Lambda)(v,\bar v) \ = \ \|\Phi_{\ast}(v)\|^2 \,,
\end{equation}
for all $v \in TB$, \cite[Proposition 7.15]{MR0282990}.  The product $\w^{\tdim\,B}$ is non-negative, and positive at those points $b \in B$ where $\td \Phi_b$ is injective.   At infinity $c_1(\Lambda)$ extends to a $(1,1)$--current $c_1(\Le)$ on $\olB$ where it represents the Chern class of the extended $\Le \to \olB$ \cite{MR840721}.  In particular, the line bundle $\Le$ is big if and only if the period map $\Phi : B \to \GsD$ satisfies generic local Torelli.

The restriction of $c_1(\Le)$ to $Z_I^*$ is well-defined, and as an element of $\cA^{1,1}_{Z_I^*}$ and represents the Chern class of the Hodge line bundle $\Lambda_I = \left.\Le\right|_{Z_I^*}$ of the for the period map $\left.\Phizero\right|_{Z_I^*}$, \cite[Theorem 1.4.1]{GGLR}; more generally, one may show that $\w$ is well-defined on $Z_W$, \cite{GGpos}.  If $v \in T_bZ_I^*$, then \eqref{E:HLBnef} implies
\begin{equation}\label{E:c1on0}
  c_1(\Le)(v,\bar v) \ = \ \|\Phizero_{I,\ast}(v)\|^2 \,.
\end{equation}
In particular, $\Le$ is nef.  (See also \cite[Lemma 6.4]{BBT18}.)  And the differential of the period map $\Phi_I : Z_I^* \to \Gamma_I\bs D$ is every where injective if and only if $\left.c_1(\Le)\right|_{Z_I^*}$ is positive.  Thus, if $C \subset \olB$ is an irreducible curve and $\Phizero(C)$ is not a point, then 
\[
  \Le \cdot C \ = \ \tdeg \left. \Le \right|_C \ = \ \int_C c_1(\Le) \ > 0 \,.
\]

\subsection{Proof of Proposition \ref{P:pg1}} \label{S:prf-pg1}

It suffices to show that there exists $m_0$ so that $(\Le-[Z])\cdot C > 0$ for all curves $C \subset \olB$ and $m \ge m_0$.  Without loss of generality, we may assume that $C$ is an irreducible curve.  

If the image $\Phizero(C)$ is also a curve, then \S\ref{S:loctor0} implies that $\Le \cdot C > 0$.  So we will have $(m\Le - [Z]) \cdot C > 0$ when $m \gg0$.  Now suppose that $C \subset \fibrezero$ is contained in a $\Phizero$--fibre.  Again, \S\ref{S:loctor0} implies that $\Le \cdot C=0$.  However, the hypothesis that $\td\Phione_W$ is injective implies that $\left.\cN^*_{Z / \olB}\right|_C$ is ample, cf.~the discussion following \cite[Theorem 4.3]{GGRhatPT}.  In particular, $-[Z]\cdot C >0$.  The proposition now follows from Remark \ref{R:eff}.
\hfill\qed

\subsection{A local Torelli condition for $(\olB,Z;\Phi)$} \label{S:dce}

One of the technical hypothesis needed for our applications is a local Torelli condition for $(\olB,Z;\Phi)$.  The condition is expressed in terms of the extension of the Gauss--Manin connection (Lemma \ref{L:ltc2}).

Recall Deligne's extension $\cFe^p \to \olB$ of the Hodge bundles \cite{MR1416353}.  Let 
\[
  \cEe^p \ = \ \cFe^p/\cFe^{p+1} \ = \ \tGr^p_{\cFe} \,,
\]
and consider the associated graded vector space 
\[
  \cEe \ = \ \op\,\cEe^p \,.
\]
The Gauss--Manin connection induces a bundle map
\begin{equation}\label{E:Psi}
  \Psi : T_\olB(-\log Z) \ \to \ \tGr^{-1}_{\cFe}(\tEnd(\cEe))\,.
\end{equation}
We review the definition of $\Psi$ in \S\S\ref{S:Psi}--\ref{S:horiz}.

\begin{remark} \label{R:ltc}
At points $b \in B$ the map $\Psi$ is the differential of the period map
\[
  \td\Phi_b \ = \ \left.\Psi\right|_{T_{B,b}} \,.
\]
\end{remark}

\begin{definition}\label{dfn:ltc-trip}
We say that \emph{local Torelli condition holds for $(\olB,Z;\Phi)$} when \eqref{E:Psi} is injective.
\end{definition}


Lemma \ref{L:ltc2} is a generalization of Remark \ref{R:ltc}.

\begin{lemma}\label{L:ltc2}
The local Torelli condition holds for $(\olB,Z;\Phi)$ if and only if 
\begin{i_list_emph}
\item The differential $\td \Phione_I : T (Z_I^*) \to T(\Gamma_I\bs D^1_I)$ is injective for all $I$.
\item
The $\{N_i \ | \ i \in I\}$ are linearly independent for all $I$.
\end{i_list_emph}
\end{lemma}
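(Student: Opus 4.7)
The plan is to work locally near a point $p \in Z_I^*$, use the nilpotent orbit theorem and Deligne's canonical extension to express $\Psi$ explicitly, and then exploit a weight-grading decomposition of the target that separates the ``tangential'' and ``residue'' contributions. Choose local coordinates $(z_i)_{i \in I},(w_\alpha)$ on $\olB$ near $p$ with $Z_i = \{z_i = 0\}$ for $i \in I$. The logarithmic tangent bundle fits into the standard short exact sequence
\[
  0 \ \to \ TZ_I^* \ \to \ T_\olB(-\log Z)|_{Z_I^*}
  \ \to \ \bigoplus_{i \in I}\bC\cdot (z_i\partial_{z_i})|_p \ \to \ 0\,,
\]
and I would analyze $\Psi$ piece by piece with respect to this splitting.

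On the residue quotient, writing the Deligne extension in a local flat frame as $F(z,w) = \exp\!\bigl(\sum_{i \in I}\tfrac{\log z_i}{2\pi\ti}N_i\bigr)\tilde F(z,w)$ with $\tilde F$ holomorphic across $Z$, the standard residue calculation for the Gauss--Manin connection shows that $\Psi(z_i\partial_{z_i})|_p$ is a nonzero scalar multiple of $[N_i]$ in $\tGr^{-1}_{\cFe}\tEnd(\cEe)|_p$.  On the tangential piece, the first-order variation of $\cFe/\cFe^{\bullet+1}$ along $Z_I^*$ is by construction the datum tracked by $\Phione_I$: it records both the variation of the graded Hodge structures on $\tGr^{\cW^I}_\bullet$ and the level-one extension data of the LMHS.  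Hence $\Psi|_{TZ_I^*}$ agrees with $\td\Phione_I$.

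The decisive observation is that these two contributions lie in different $\cW^I$-weight strata of $\tGr^{-1}_{\cFe}\tEnd(\cEe)|_p$: since $\cW^I$ is locally constant along $Z_I^*$ and $\Phione_I$ records only level-one extension data, the image of $\td\Phione_I$ lies in the ``$W_0/W_{-2}$''--slice of the target; whereas each $N_i$ strictly shifts $\cW^I$ by $-2$ and is nonzero in $\tGr^{\cW^I}_{-2}\tEnd$.  Consequently any relation $\Psi(u)+\sum_{i \in I} a_iN_i = 0$ splits by weight: projecting modulo $W_{-2}$ forces $\td\Phione_I(u)=0$ (and hence $u=0$ by \emph{(i)}), after which the residual $\sum a_iN_i = 0$ in $\tGr^{\cW^I}_{-2}\tEnd$ forces $a_i=0$ by \emph{(ii)}.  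Running this pointwise analysis across every stratum---with $Z_\emptyset^* = B$, where \emph{(ii)} is vacuous and \emph{(i)} reduces to ordinary local Torelli for $\Phi$ (Remark \ref{R:ltc})---yields the equivalence in the lemma.

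The main technical point is the residue identification of $\Psi(z_i\partial_{z_i})$ with $[N_i]$ up to scale, which requires careful matching of normalizations between Deligne's extension and the Gauss--Manin residue; I would extract this directly from the construction of $\Psi$ in \S\ref{S:Psi}--\S\ref{S:horiz}.  Once this is in hand, the weight-grading decoupling is essentially formal, and the linear-independence versus injectivity statements drop out as above.
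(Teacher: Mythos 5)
Your forward implication --- \emph{(i)}$+$\emph{(ii)} $\Rightarrow$ injectivity of $\Psi$ --- is essentially sound: the residue identification $\Psi(z_i\partial_{z_i})|_{Z_I^*} = [N_i]/2\pi\bi$ in the adapted (Deligne-extension) frame is correct and consistent with the paper's description of $\theta = \xi^{-1}\td\xi$ in \S\ref{S:Psi}--\ref{S:horiz}, and projecting a relation $\Psi(u)+\sum a_i N_i = 0$ modulo $W_{-2}$ does kill the residue terms, leaving $\td\Phione_I(u)=0$, whence $u=0$ and then $a_i=0$. But there is a genuine gap in the converse direction, and it originates in your assertion that ``$\Psi|_{TZ_I^*}$ agrees with $\td\Phione_I$.'' This is false as stated: by horizontality (\S\ref{S:horiz}) the tangential part of $\Psi$ decomposes as $\tsum_q\,\theta^{-1,q}$, and $\Phione_I$ records only the components $q = 1, 0$ (the graded period map and the \emph{level-one} extension data). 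The components $\theta^{-1,q}$ with $q \le -1$ --- the level $\ge 2$ extension data of the LMHS --- also appear in $\Psi(u)$ for $u$ tangent to $Z_I^*$, and they lie in the \emph{same} weight strata ($W_{-2}$ and below) as the residues $N_i$, so your claim that the tangential and residue contributions occupy disjoint weight slices is literally incorrect (harmless in the forward direction, since after $u=0$ everything vanishes, but fatal in the converse). Concretely: if $\td\Phione_I(u)=0$ for some $0 \neq u \in T(Z_I^*)$, your argument gives no reason why $\Psi(u)$ should vanish --- it could a priori be nonzero purely in level $\ge 2$ --- so injectivity of $\Psi$ does not, by weight-splitting alone, yield condition \emph{(i)}.

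Closing this gap requires a nontrivial input that your proposal does not supply: one must know that the level $\ge 2$ horizontal coordinates cannot vary independently of the level $\le 1$ data along a variation of limiting mixed Hodge structure. This is precisely what the paper's proof (\S\ref{S:prfltc2}) imports from the companion paper: it identifies the horizontal coordinates of $\left.\Psi\right|_{\olOone}$ with the coordinates $\td\e_\mu$ of the map $\Psi_1$ of \cite[(5.17)--(5.18)]{GGRhatPT} and then invokes \cite[Lemma 5.19]{GGRhatPT}, which packages exactly the equivalence you are after. So your route differs from the paper's (a direct local-coordinate and residue analysis versus a reduction to the companion lemma), and its ``if'' half could be made rigorous along the lines you sketch; but the ``only if'' half needs either the cited result or an independent argument controlling the $\theta^{-1,q}$, $q\le -1$, components, and your stated worry (normalization of the residue) is not where the real difficulty lies.
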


\noindent The lemma is proved in \S\ref{S:prfltc2}.

\subsubsection{Maurer--Cartan form} \label{S:Psi}

The composition of the lift \cite[(5.14)]{GGRhatPT} with the map \cite[(B.9)]{GGRhatPT} defines
\[
  X \circ \tPhi_\fibreone : \tilde\sO{}^1 \ \to \ \ff^\perp \,.
\]
It will be convenient to set 
\[
  \xi = \exp(X\circ\tPhi_\fibreone) : \sS \ \to \ \exp(\ff^\perp)  \,.
\]
Keep in mind that, since $\ff^\perp$ is a nilpotent subalgebra, the exponential defines a biholomorphism $\ff^\perp \simeq \exp(\ff^\perp)$ so that $X \circ \tPhi_\fibreone$ and $\xi$ carry the equivalent information.

Fix a MHS $(W,F)$ arising along $\fibreone$.  Fix a basis $\{v_j\}$ of $V$ so that every $v_j$ is contained in some $V^{p_j,q_j}_{W,F}$; equivalently, $v_j \in F^{p_j}$ but $v_j \not\in F^{p_j+1}$, and $v_j \in W_{p_j+q_j}$ but $v_j \not\in W_{p_j+q_j-1}$.  Then 
\[
  \phi_j \ = \ \xi \cdot v_j 
\]
defines a framing of $\cVe$ that is adapted to the Hodge filtration $\cFe^p \subset \cVe$.  The key point here is that \cite[Proposition 5.1]{GGRhatPT} implies that the Deligne's construction \cite{MR1416353} applies to this slightly more general setting.  So we may identify the $\{ \phi_j \ | \ p_j = p \}$ with a holomorphic framing of $\cEe^p$.

The pullback 
\[
  \theta \ = \ \xi^{-1} \td \xi
\]
under $\xi$ of the Maurer--Cartan form on $\exp(\ff^\perp)\subset G_\bC$ is $\ff^\perp$--valued.  Let $\xi^i_j$ be the matrix coefficients of $\xi$ with respect to the basis $\{v_j\}$; that is, $\xi \cdot v_j = \xi_j^i v_i$.  Likewise, let $\theta^i_j = (\xi^{-1})^i_k \td \xi^k_j$ be the matrix entries of
\[
  \theta = \theta^i_j\,v_i \ot v^j \,.  
\]
The Gauss--Manin connection satisfies
\[
  \nabla \phi_j \ = \ \theta_j^i \ot \phi_i \,,
\]
and
\begin{equation} \label{E:Psi1}
  \left.\Psi\right|_{\olOone} \ = \ \theta^i_j\,\phi_i \ot \phi^j \,.
\end{equation}

It is instructive to review the proof of the well-known

\begin{lemma} \label{L:logmc}
The pull-back of the Maurer-Cartan form is a log 1-form; that is, 
\[
  \theta^i_j \in \Omega^1_{\olOone}(\log Z\cap\olOone) \,.
\]
\end{lemma}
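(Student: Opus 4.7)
The plan is to invoke Schmid's nilpotent orbit theorem to factor $\xi$ near a boundary point as an explicit multi-valued exponential times a single-valued holomorphic piece, and then differentiate so that the $\td t_i/t_i$ singularities appear only through $\td\log t_i$ while the rest remains holomorphic.

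First I will work in local coordinates $(t_1,\dots,t_k,s_1,\dots,s_\ell)$ on a polydisk $\Delta^{k+\ell}\subset \olOone$ centered at a point of $Z\cap\olOone$, chosen so that $Z$ is cut out locally by $t_1\cdots t_k=0$. Let $N_i \in \ff^\perp$ denote the commuting monodromy logarithms around the components $\{t_i=0\}$. The form of the nilpotent orbit theorem that is needed here --- the extension statement which already underlies the construction of the adapted framing $\{\phi_j\}$, specifically \cite[Proposition~5.1]{GGRhatPT} --- produces a factorization
\[
  \xi \ = \ \exp\Bigl(\tsum_{i=1}^k \tfrac{\log t_i}{2\pi\bi}\,N_i\Bigr)\cdot\eta,
\]
where $\eta:\Delta^{k+\ell}\to\exp(\ff^\perp)$ is holomorphic and invertible across $\bigcup_i\{t_i=0\}$.

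Next I will compute $\theta=\xi^{-1}\td\xi$ directly. Setting $A=\sum_i \tfrac{\log t_i}{2\pi\bi}\,N_i$ and using that the $N_i$ commute with one another and with $\td A$, one has $\td(\exp A)=\exp(A)\cdot \td A$, so
\[
  \theta \ = \ \tAd(\eta^{-1})\Bigl(\tsum_{i=1}^k N_i\,\tfrac{\td t_i}{2\pi\bi\,t_i}\Bigr) \ + \ \eta^{-1}\,\td\eta.
\]
The first summand has logarithmic poles along $Z$: each $\td t_i/t_i$ is by definition a section of $\Omega^1_{\olOone}(\log Z\cap\olOone)$, and the matrix entries of $\tAd(\eta^{-1})(N_i)$ are holomorphic on the polydisk. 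The second summand is holomorphic since $\eta$ is holomorphic and invertible across the boundary. Hence every matrix coefficient $\theta^i_j$ has at worst a simple pole along $Z$, which is the claim.

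The main obstacle is not the calculation, which is bookkeeping, but producing the correct local factorization in the first step. Schmid's original nilpotent orbit theorem concerns the classical period map, whereas $\xi$ here comes from the lift $X\circ\tPhi_\fibreone$, so one must appeal to the extension result \cite[Proposition~5.1]{GGRhatPT} (the appropriate generalization of Schmid's theorem in this setting) both to guarantee that $\eta$ is genuinely holomorphic across the boundary and to know that the monodromy logarithms $N_i$ actually land in $\ff^\perp$. Once that structural input is in hand, the rest of the argument is a direct computation of the pull-back of the Maurer--Cartan form.
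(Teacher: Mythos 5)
Your argument is correct, but it takes a genuinely different route from the paper's. The paper's proof is a two-step reduction: by horizontality \eqref{SE:thetaX} the only nonzero components of $\theta$ are $\td (X \circ \tPhi_\fibreone)^{-1,\tinyb}$, so the lemma reduces to \eqref{E:dX}; the matrix entries of that form are precisely the $\td\e_\mu$ of \cite[\S5.3.1]{GGRhatPT}, so the claim is equivalent to the already-established \cite[(5.17)]{GGRhatPT}. You instead prove the log-pole property for \emph{all} entries $\theta^i_j$ directly, without invoking horizontality, by factoring $\xi = \exp\bigl(\tsum_i \tfrac{\log t_i}{2\pi\bi} N_i\bigr)\eta$ with $\eta$ holomorphic across the divisor and computing $\theta = \tAd(\eta^{-1})\bigl(\tsum_i N_i\,\tfrac{\td t_i}{2\pi\bi\,t_i}\bigr) + \eta^{-1}\td\eta$; in effect you re-derive the content of the cited input rather than quoting it. What each buys: your version is self-contained given the multivariable nilpotent-orbit factorization and is slightly more general (it does not use the IPR), while the paper's version is shorter because the asymptotic analysis --- exactly the kind of factorization you write down --- is delegated to the companion paper. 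Two points you should make explicit to close the argument cleanly: (i) single-valuedness of $\eta$ (hence of both summands of $\theta$) on the punctured polydisk --- continuation around $t_i = 0$ sends $\xi \mapsto e^{N_i}\xi$ while the exponential prefactor picks up the same left factor $e^{N_i}$, using that the $N_i$ commute and lie in the nilpotent algebra $\ff^\perp$, so $e^{N_i}\xi \in \exp(\ff^\perp)$ and the factors cancel; note that invertibility of $\eta$ is automatic since its values are unipotent; and (ii) your attribution: in the paper's ledger, \cite[Proposition 5.1]{GGRhatPT} is invoked only for the applicability of Deligne's construction (holomorphicity of the adapted framing), whereas the log-pole content you need for the factorization is carried by \cite[(5.17)]{GGRhatPT}, so you should cite the latter (or prove the factorization from Schmid/CKS in the several-variable case) rather than resting the whole weight on Proposition 5.1.
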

 
\noindent This is done in \S\ref{S:prflogmc}.  First we review the IPR in this context.

\subsubsection{Horizontality} \label{S:horiz}

Since $\theta$ takes value in $\ff^\perp = \fg_{W,F}^{-,\tinyb}$, we have 
\[
  \theta^i_j \ = \ 0 \quad\forall\quad p_i-p_j > 0 \,.
\]
Horizontality asserts that 
\begin{subequations} \label{SE:thetaX}
\begin{equation}\label{E:theta1}
  \theta^i_j \ = \ 0 \quad\forall\quad p_i-p_j \not= -1 \,.
\end{equation}
And this implies
\begin{equation}
  \theta^i_j \ = \ \td (X \circ \tPhi_\fibreone)^i_j 
  \quad\forall\quad p_i-p_j = -1\,,
\end{equation}
with $X^i_j$ the matrix entries of $X$ with respect to the basis $\{v_j\}$ (defined by $X v_j = X^i_j v_i$); in the notation of \cite[\S B.4]{GGRhatPT}, this is equivalently the statement that
\begin{equation}
  \theta \ = \ \td (X \circ \tPhi_\fibreone)^{-1,\tinyb} \,.
\end{equation}
\end{subequations}
Letting $\theta^{-1,q}$ denote the component of $\theta$ taking value in $\fg^{-1,q}_{W,F}$, we have 
\[
  \theta \ = \ \theta^i_j\,v_i \ot v^j
  \ = \ \tsum\,\theta^{-1,q} \,, 
\]
and
\[
  \theta^{-1,q} \ = \ 
  \sum_{\mystack{p_i-p_j=-1}{q_i-q_j=q}}
  \theta^i_j\,v_i \ot v^j \,.
\]
As a consequence we obtain \eqref{E:Psi}.

\subsubsection{Proof of Lemma \ref{L:logmc}} \label{S:prflogmc}

It follows from \eqref{SE:thetaX} that it suffices to prove 
\begin{equation} \label{E:dX}
  \td (X \circ \tPhi_\fibreone)^{-1,\tinyb} \ \in \ \Omega^1_{\olO{}^1}(\log Z \cap \olO{}^1) \,.
\end{equation}
The 1-form $\td (X \circ \tPhi_\fibreone)^{-1,\tinyb}$ is the differential of the horizontal component of the period matrix.  In particular, the $\td\e_\mu$ of \cite[\S5.3.1]{GGRhatPT} are the matrix entries of $\td (X \circ \tPhi_\fibreone)^{-1,\tinyb}$.  So \eqref{E:dX} is equivalent to \cite[(5.17)]{GGRhatPT} \hfill\qed

\subsubsection{Proof of Lemma \ref{L:ltc2}}\label{S:prfltc2}

The coordinates $\td\e_\mu$ of the map $\Psi_1$ defined in \cite[(5.18)]{GGRhatPT} are the horizontal coordinates of $\left.\Psi\right|_{\olOone}$.  In particular, \cite[Lemma 5.19]{GGRhatPT} is equivalent to Lemma \ref{L:ltc2}.
\hfill\qed

\subsection{Geometric properties of $K_\olB + [Z]$}\label{S:K+Z-geom}

\subsubsection{Geometric implications of local Torelli for $(\olB,Z;\Phi)$} \label{S:ltcgi}

Suppose that the local Torelli condition holds for $(\olB,Z;\Phi)$ (Definition \ref{dfn:ltc-trip}).  Then we may identify $T_{\olB}(-\log Z)$ with a subbundle of $\tGr_{\cFe}^{-1}\tEnd(\cEe)$, and
\[
  (K_\olB + [Z])^* \ = \ \tw^{\tdim\,B} T_{\olB}(-\log Z) 
\]
with subbundle of 
\[
  \cH \ := \ \tw^{\tdim\,B} \tGr_{\cFe}^{-1}\tEnd(\cEe) \,.
\]
The singular metrics on the Hodge bundles $\cEe^p \to \olB$ induce singular metrics on $\tGr_{\cFe}^{-1}\tEnd(\cEe)$ and $\cH$.  The singularities, curvatures and Chern forms of these metrics are much studied \cite{MR840721, MR946244, GGpos, GGLR}.  Over $B$ the metrics and Chern forms are smooth; the Chern forms extend to currents on $\olB$ where they represent the extended vector bundles.  (As already discussed in \S\ref{S:loctor0}, the analogous statements hold for the Hodge line bundle.)  

Let $\Theta_\cH$ denote the curvature matrix of $\cH$, and 
\[
  c_1(\cH) \ = \ \tfrac{\bi}{2\pi} \mathrm{tr}\,\Theta_\cH
\]
the first Chern form.  Let 
\[
  c_1(\Le) \ = \ \tfrac{\bi}{2\pi} \Theta_{\Le}
\]
denote the Chern form of the Hodge line bundle $\Le\to\olB$ (\S\ref{S:loctor0}).

The line bundle $(K_\olB + [Z])^*$ inherits a singular metric from its containment in $\cH$.  Let 
\[
    c_1(K_{\olB}+[Z]) \ = \ \tfrac{\bi}{2\pi} \Theta_{K_{\olB}+[Z]}
\]
denote the Chern form of $K_{\olB}+[Z]$.  We will see that $c_1((K_\olB + [Z])^*)$ is related to $c_1(\cH)$ by the second fundamental form of $(K_\olB + [Z])^* \inj \cH$ (Lemma \ref{L:VIII.12a}), and this will give us control over the $c_1(K_{\olB}+[Z])$.

\begin{lemma}\label{L:VIII.10}
The curvature form $\Theta_\cH$ is nonpositive and there exist positive constants $\epsilon,\epsilon'$ so that 
\[
  \epsilon \,c_1(\Le) \ \le \ -c_1(\cH) \ \le \ \epsilon'\,c_1(\Le) \,.
\]
\end{lemma}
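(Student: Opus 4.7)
The plan is to work on the period domain $D$, where both $\cH$ and $\Le$ originate as pullbacks of $G_\bR$-homogeneous Hermitian bundles under the period map, and to compare their curvatures via Griffiths' standard curvature formulas for Hodge bundles on $D$.

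First I would note that $\bfH_0 := \tGr^{-1}_F\tEnd(\cE) = \bigoplus_p \tHom(\cE^p, \cE^{p-1})$ is a homogeneous Hermitian bundle on $D$, with metric induced from the Hodge metrics on the summands $\cE^p$. A classical Hodge-theoretic computation (of the sort underlying Griffiths' distance-decreasing theorem) shows that $\Theta_{\bfH_0}$ is Griffiths seminegative in horizontal directions. This seminegativity is inherited by $\bigwedge^{\tdim B} \bfH_0$, and pulls back under the horizontal period map to give the first assertion: $\Theta_\cH \le 0$ on $B$ (and then across $Z$ as a $(1,1)$-current by the standard asymptotic analysis of the Hodge metric from \cite{MR840721, GGpos}).

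For the comparison of Chern forms, I would evaluate both on horizontal vectors $v$ at a reference point $o \in D$. Griffiths' formula \eqref{E:HLBnef} gives
\[
  c_1(\Le)(v, \bar v) \ = \ \|\Phi_*(v)\|^2\,.
\]
A parallel calculation, using $c_1(\cH) = \binom{r-1}{\tdim B - 1}\,\mathrm{tr}(\Theta_{\bfH_0})$ together with the Griffiths formulas for $\Theta_{\cE^p}$, yields
\[
  -c_1(\cH)(v,\bar v) \ = \ \sum_p \kappa_p\,\|\theta_p(v)\|^2
\]
for positive constants $\kappa_p$ depending only on the Hodge numbers, where $\theta_p : \cE^p \to \cE^{p-1}$ is the $(p,p-1)$ component of the action of $v$ viewed as an element of $\tGr^{-1}_F\tEnd(\cE)$. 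In particular both $c_1(\Le)|_{T_hD}$ and $-c_1(\cH)|_{T_hD}$ are $V$-invariant positive definite Hermitian forms on the finite-dimensional horizontal tangent space, where $V \subset G_\bR$ is the isotropy of $o$.

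Having reduced to two $V$-invariant positive definite Hermitian forms on $T_hD_o$, compactness of the $V$-action on its unit sphere produces positive constants $\epsilon,\epsilon'$ with $\epsilon\,c_1(\Le) \le -c_1(\cH) \le \epsilon'\,c_1(\Le)$ pointwise. $G_\bR$-homogeneity propagates this to all of $D$, and pullback by $\Phi$ together with the current extension across $Z$ already invoked gives the stated inequality on $\olB$. The main obstacle is the explicit curvature computation in the previous paragraph: writing $-c_1(\cH)(v,\bar v)$ as a manifestly positive combination of squared Higgs norms requires that the contributions of $\cE^p$ and $\cE^{p-1}$ to $c_1(\tHom(\cE^p,\cE^{p-1}))$ reinforce rather than cancel, which ultimately rests on the alternating positivity/negativity pattern of the Hodge bundle curvatures on a period domain.
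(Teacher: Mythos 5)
There is a genuine gap, and it sits exactly where you flagged your ``main obstacle'': the contributions do \emph{not} reinforce, and your claimed identity $-c_1(\cH)(v,\bar v) = \sum_p \kappa_p\|\theta_p(v)\|^2$ with all $\kappa_p>0$ is false for $\bfH_0 = \oplus_p\tHom(\cE^p,\cE^{p-1})$. Using $c_1(\tHom(\cE^p,\cE^{p-1})) = h_p\,c_1(\cE^{p-1}) - h_{p-1}\,c_1(\cE^p)$ (where $h_p = \mathrm{rk}\,\cE^p$) together with Griffiths' formula $c_1(\cE^p)(v,\bar v) = \|\theta_p(v)\|^2 - \|\theta_{p+1}(v)\|^2$ (the convention forced by \eqref{E:HLBnef}), the coefficient of $\|\theta_p(v)\|^2$ in $-c_1(\bfH_0)(v,\bar v)$ telescopes to $h_{p-1}+h_p-h_{p-2}-h_{p+1}$. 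For weight $3$ with Hodge numbers $(1,1,1,1)$ this vanishes at $p=2$, and for $(k,1,1,k)$ with $k\ge 2$ it is strictly negative; moreover a horizontal vector $v \in \fg^{-1,1}$ whose only nonzero component is the middle map $V^{2,1}\to V^{1,2}$ is compatible with the polarization (the conditions only couple the $V^{3,0}\to V^{2,1}$ and $V^{1,2}\to V^{0,3}$ components to each other). On such $v$ your form $-c_1(\cH)$ is zero or even negative while $\|\Phi_*(v)\|^2 > 0$, so $-c_1(\cH)$ is not positive definite on the horizontal tangent space, your claimed Griffiths seminegativity of $\Theta_{\bfH_0}$ in horizontal directions also fails in these cases, and the compactness/invariance step at the end can only produce $\epsilon = 0$ rather than $\epsilon > 0$. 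The lower bound $\epsilon\,c_1(\Le) \le -c_1(\cH)$ is the whole content of the lemma (it drives Lemma \ref{L:VIII.12a} and the bigness of $K_\olB+[Z]$), so this is not a repairable detail within your setup.

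The underlying misreading is that $\tGr^{-1}_{\cFe}\tEnd(\cEe)$ in the paper is not the full Hom bundle: the proof's key observation is the identification $\left.\tGr^{-1}_{\cFe}\tEnd(\cEe)\right|_B \simeq \Phi^*(\bfI)$ with $\bfI \subset TD$ the \emph{horizontal tangent bundle}, i.e.\ the grade $-1$ piece of the $Q$-compatible (Lie algebra $\fg$-valued) endomorphisms, so that $\cH = \Phi^*(\bfH)$ with $\bfH = \tw^{\tdim B}\,\bfI$. For the adjoint bundle no telescoping cancellation occurs: by \eqref{E:trThetaD} one has $\mathrm{tr}\,\Theta_D(v,\bar v) = -\mathrm{tr}\bigl(A_1(v)\,{}^t\overline{A_1(v)}\bigr)$, a genuine norm square of $\tad(v)$ which is nonzero for every $0\ne v$ horizontal by semisimplicity of $\fg$, giving strict negativity bounded away from zero \cite[Theorem 13.6.3]{MR3727160}. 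The paper then compares $\mathrm{tr}\,\Theta_\bfH|_\bfI$ with $\mathrm{tr}\,\Theta_D|_\bfI$ and with $c_1(\bfL)|_\bfI$ by Schur's lemma on irreducible $K^0$-submodules (Lemma \ref{L:cherna}, \eqref{E:ThetaDw}, \eqref{E:c1c2}), and concludes by homogeneity and pullback --- the same invariance-plus-compactness skeleton you proposed, but applied to a bundle whose curvature trace cannot degenerate in horizontal directions. If you want to salvage your computation, you must replace $\oplus_p\tHom(\cE^p,\cE^{p-1})$ by its $\fg$-subbundle and control the trace by the adjoint action rather than by the Hodge-number bookkeeping above.
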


\noindent The lemma is proved in \S\ref{S:prf10}.

\begin{lemma}\label{L:VIII.12a}
There exists a non-negative $\tau \in \cA^{1,1}_B$ that extends to a current on $\olB$, and a positive constant $\epsilon$ so that 
\[
  \epsilon\,c_1(\Le) \,+\, \tau \ \le \ c_1( K_\olB + [Z] ) \,.
\]
\end{lemma}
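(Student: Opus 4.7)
The plan is to combine the line subbundle inclusion $L := (K_\olB + [Z])^* \hookrightarrow \cH$ furnished by local Torelli with the Chern--Gauss--Codazzi formula for a Hermitian line subbundle, and then invoke Lemma \ref{L:VIII.10} to convert the resulting curvature identity into the desired lower bound in terms of $c_1(\Le)$.

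First, taking the top exterior power of the inclusion $T_\olB(-\log Z)\hookrightarrow \tGr_\cFe^{-1}\tEnd(\cEe)$ supplied by local Torelli (\S\ref{S:ltcgi}) yields the line subbundle
\[
  L \ := \ (K_\olB + [Z])^{*} \ = \ \bigwedge\nolimits^{\dim B} T_{\olB}(-\log Z)
  \ \hookrightarrow \ \cH \,.
\]
Endow $L$ with the metric induced from that on $\cH$, and let $\beta$ denote the associated second fundamental form, a $(1,0)$-form with values in $\mathrm{Hom}(L,\cH/L)$. For a local unit-length frame $s$ of $L$, the standard Gauss--Codazzi identity gives $\Theta_L = \langle\Theta_\cH\,s,\,s\rangle - |\beta(s)|^2$, so that, setting $\tau := \tfrac{\bi}{2\pi}|\beta(s)|^2$ (a well-defined non-negative smooth $(1,1)$-form on $B$, independent of the chosen unit frame) and dualizing, one obtains on $B$
\[
  c_1(K_\olB + [Z]) \ = \ -\,\tfrac{\bi}{2\pi}\,\langle\Theta_\cH\,s,\,s\rangle \ + \ \tau \,.
\]

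Next, I apply Lemma \ref{L:VIII.10}, reading its hypotheses in the Griffiths sense: the endomorphism-valued curvature $\Theta_\cH$ is semi-negative at every point and in every direction, and its pairing against any unit section of $\cH$ is bounded above by $-\epsilon\,c_1(\Le)$. (The scalar trace inequality $-c_1(\cH) \ge \epsilon\,c_1(\Le)$ is the essential content, transferred to the distinguished line $L \subset \cH$ possibly at the cost of shrinking $\epsilon$.) Combined with the previous display, this yields the required inequality on $B$:
\[
  c_1(K_\olB+[Z]) \ \ge \ \epsilon\,c_1(\Le) \ + \ \tau \,.
\]

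Finally, I extend across $Z$. The second fundamental form $\beta$ is computed from the bundle map $\Psi$ of \eqref{E:Psi}, whose Maurer--Cartan description (\S\ref{S:Psi}) has only logarithmic singularities along $Z$ by Lemma \ref{L:logmc}. Combined with the asymptotic analysis of the singular Hodge metric on $\cEe$ (already invoked in \S\ref{S:loctor0} and in Lemma \ref{L:VIII.10}, and ultimately due to \cite{\CKS}), this implies that $\tau$ extends to a non-negative $(1,1)$-current on $\olB$ and that the inequality persists as an inequality of currents. The main obstacle is the passage from the trace-level estimate of Lemma \ref{L:VIII.10} to a pointwise estimate along the specific line $L \subset \cH$; this demands either a uniform Griffiths bound on $\Theta_\cH$ (which I expect to follow from the Hodge-theoretic construction of the metric on $\cH$) or a direct curvature computation along $L$. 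The currential extension across $Z$, while technical, uses only the standard Schmid-type asymptotic machinery already in place.
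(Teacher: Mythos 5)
Your first half — realizing $L=(K_\olB+[Z])^*=\tw^{\tdim\,B}T_\olB(-\log Z)$ as a line subbundle of $\cH$ via local Torelli, applying Gauss--Codazzi to get $c_1(K_\olB+[Z]) = -\tfrac{\bi}{2\pi}\langle \Theta_\cH\,s,s\rangle + \tau$ with $\tau$ the squared second fundamental form — is exactly the paper's argument (\S\ref{S:prf12a}, culminating in \eqref{E:tau}), and your treatment of the extension across $Z$ via Lemma \ref{L:logmc} and the asymptotics of the Hodge metric is also in the paper's spirit. But the step you yourself flag as ``the main obstacle'' is a genuine gap, and the resolution you expect is not available. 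Lemma \ref{L:VIII.10} is a statement about the \emph{trace} $c_1(\cH)=\tfrac{\bi}{2\pi}\mathrm{tr}\,\Theta_\cH$; a trace bound together with semi-negativity does not localize to a given line: the strict negativity in $\mathrm{tr}\,\Theta_\cH$ can be carried entirely by frame components orthogonal to $s$, so $-\langle\Theta_\cH(v,\bar v)s,s\rangle$ can vanish for particular pairs $(v,s)$ even while $-c_1(\cH)(v,\bar v)\ge\epsilon\,c_1(\Le)(v,\bar v)>0$. Worse, the uniform ``Griffiths-sense'' bound you hope for — pairing against \emph{any} unit section bounded by $-\epsilon\,c_1(\Le)$ — is false as a general statement: since $\Theta_\cH$ pointwise has the form $-\rho([\vartheta_1,\overline\vartheta_1])$ with $\overline\vartheta_1$ the $h$-adjoint of $\vartheta_1$, one computes $-\langle[x,x^*]s,s\rangle = \Vert x\,s\Vert^2 - \Vert x^*s\Vert^2$, which has no fixed sign for an arbitrary unit section $s$.

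What rescues the paper's argument is that $\eta_1$ is not an arbitrary section: it is the decomposable vector $\tw^{\tdim\,B}\fb$, where $\fb=\td\Phi(T_bB)\subset\fg^{-1,1}_\varphi$ is an \emph{abelian} subalgebra by the IPR. Abelian-ness kills the bad term, $\vartheta_1\cdot\eta_1=0$, so that $-h(\Theta\cdot\eta_1,\eta_1)=h(\overline\vartheta_1\cdot\eta_1,\overline\vartheta_1\cdot\eta_1)\ge 0$, and the paper then shows $\overline\vartheta_1\cdot\eta_1\neq0$ using $[\overline\vartheta_1,\vartheta_1]\neq0$ (\cite[Corollary 12.6.3]{MR3727160}). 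This is the ``direct curvature computation along $L$'' your proposal defers, and it is the heart of the lemma — without it the inequality does not follow. Note also that the uniformity of $\epsilon$ over $B$ is not obtained from Lemma \ref{L:VIII.10} either: a priori $\epsilon$ depends on $b$, and the paper removes this dependence by $G_\bR$-homogeneity together with the compactness of the Grassmannian $\tGr(\tdim\,B,\fg^{-1,1}_\varphi)$ parameterizing the possible abelian subalgebras $\fb$. To repair your proof, replace the appeal to Lemma \ref{L:VIII.10} at this step by the pointwise Lie-algebraic computation just described, keeping Lemma \ref{L:VIII.10} only for the comparison of traces where it is actually used.
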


\begin{proof}[Sketch of Proof]\label{sp:VIII.12a}
The lemma is proved in \S\ref{S:prf12a}.  Here we outline the underlying geometric ideas.  Over $B$ the curvature forms $\Theta_\cH$ and $\Theta_{(K_{\olB}+[Z])^*}$ are related by the second fundamental form 
\begin{equation}\label{E:dfnUp}
  \Upsilon : \cA^0_B((K_{\olB}+[Z])^* ) \ \to \ 
  \cA^{1,0}_B(\cH/(K_{\olB}+[Z])^*) \,,
\end{equation}
which measures the failure of the Chern connection on $\cH$ to preserve the subbundle $(K_{\olB}+[Z])^*$.  We have 
\begin{equation}\label{E:Up}
  \left.\Theta_\cH\right|_{(K_\olB + [Z])^*} \ = \ 
  \Theta_{(K_\olB + [Z])^*} \ + \ (\Upsilon,\Upsilon) \,,
\end{equation}
with $(\Upsilon,\Upsilon)$ a $(1,1)$--form constructed from $\Upsilon$ and the Hermitian metric.  Setting
\[
  \tau \ = \ \tfrac{\bi}{2\pi} (\Upsilon,\Upsilon)\,,
\]
we have 
\begin{equation}\label{E:tau}
  \tfrac{\bi}{2\pi} \left.\Theta_\cH\right|_{(K_\olB + [Z])^*}
  \ = \ -c_1(K_\olB+[Z]) \,+\, \tau \,.
\end{equation}
It then remains to show that there exists a positive constant $\epsilon$ so that 
\[
  \epsilon\,c_1(\Le) \ \le \ 
  -\tfrac{\bi}{2\pi}\left.\Theta_\cH\right|_{(K_\olB + [Z])^*}\,.
\]
We will see that this is a consequence of homogeneity, the structure of the curvature of vector bundles on $D$ and the IPR.
\end{proof}

Note that \eqref{E:c1on0}, Lemma \ref{L:VIII.10} and \eqref{E:tau} imply
\begin{equation}\label{E:tauon0}
  \left.c_1(K_{\olB} + [Z])\right|_\fibrezero 
  \ = \ \left.\tau\right|_\fibrezero \,.
\end{equation}
So to establish the ampleness of $K_{\olB} + [Z]$ we will need to show that 
$\left.\tau\right|_\fibrezero$ is positive.  For this we make the simplifying assumption (expected to be unnecessary, Remark \ref{R:asms}) that the proper, analytic map 
\begin{equation}\label{E:PhiW}
  \Phizero_W: Z_W \ \to \ \wp^0_W \ \subset \ \Gamma_W\bs D_W^0
\end{equation}
of \cite[\S\S2.3--2.4]{GGRhatPT} has constant rank.  Then $\Phizero_W : Z_W \to \wp_W^0$ is a fibration with fibre $\fibrezero$.  Recall that the restriction $\left.\Phione\right|_\fibrezero$ takes value in a compact torus $T_W$, \cite[Theorem 4.3]{GGRhatPT}.  The local Torelli condition for $(\olB,Z;\Phi)$ implies that the differential of $\left.\Phione\right|_\fibrezero$ is injective (Lemma \ref{L:ltc2}).  So we have a Gauss map
\[
  \fibrezero \ \to \ \tGr(r_W , T(T_W)) \,.
\]  
Since $T_W$ is a torus, we may translate each tangent space $T_x(T_W)$ to a fixed $T_e(T_W) = \bC^{d_W}$.  In this way we obtain a Gauss map
\begin{equation}\label{E:Gauss}
  \cG(\left.\Phione\right|_\fibrezero) : \fibrezero \ \to \ 
  \tGr(r_W , \bC^{d_W})
\end{equation} 
to a fixed Grassmannian.

\begin{lemma}\label{L:VIII.12b}
Suppose that the map \eqref{E:PhiW} has constant rank.  Then the restriction $\left.\Upsilon\right|_\fibrezero$ may be identified with the differential of the Gauss map \eqref{E:Gauss}.  In particular, the restriction $\left.c_1(K_\olB+[Z])\right|_\fibrezero$ of the Chern form to $\fibrezero$ is well defined, and it is positive if and only if the differential of the Gauss map $\cG(\left.\Phione\right|_\fibrezero)$ is injective (equivalently, the Gauss map is finite to one).
\end{lemma}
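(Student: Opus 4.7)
The plan is to first reduce the calculation of $\left.c_1(K_\olB+[Z])\right|_\fibrezero$ to the Hermitian norm of the restricted second fundamental form, and then identify that second fundamental form with the differential of the Gauss map \eqref{E:Gauss}.

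First, I would observe that since $\fibrezero$ is a connected component of a $\Phizero$--fibre, equation \eqref{E:c1on0} yields $\left.c_1(\Le)\right|_\fibrezero = 0$. Combined with Lemma \ref{L:VIII.10}, which sandwiches $-c_1(\cH)$ between positive multiples of $c_1(\Le)$, this forces $\left.c_1(\cH)\right|_\fibrezero = 0$. Plugging into \eqref{E:tau} reproduces \eqref{E:tauon0}: $\left.c_1(K_\olB+[Z])\right|_\fibrezero = \left.\tau\right|_\fibrezero = \tfrac{\bi}{2\pi}(\left.\Upsilon\right|_\fibrezero,\left.\Upsilon\right|_\fibrezero)$, which is well defined on $\fibrezero$ (the metric singularities of $\Upsilon$ are all in the $\Phizero$--horizontal directions, which are transverse to $\fibrezero$) and manifestly non-negative.

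Next, I would identify $\left.\Upsilon\right|_\fibrezero$ with the differential of the Gauss map. Under the local Torelli hypothesis, $\Psi$ realizes $T_\olB(-\log Z) \inj \tGr^{-1}_{\cFe}\tEnd(\cEe)$, and wedging gives $(K_\olB+[Z])^* \inj \cH$. Because the constant rank of \eqref{E:PhiW} forces $\Phizero$ to be locally constant along $\fibrezero$ with the only non-trivial variation coming from $\Phione$ moving in the compact torus $T_W$, the image $\Psi(T_x\fibrezero) \subset \tGr^{-1}_{\cFe}\tEnd(\cEe)_x$ may be pushed forward to $T_{\Phione(x)}T_W$ and then translated to a fixed subspace of $T_e T_W = \bC^{d_W}$ using the group structure of the torus. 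The resulting map $x \mapsto \Psi(T_x\fibrezero) \in \tGr(r_W, \bC^{d_W})$ is precisely $\cG(\left.\Phione\right|_\fibrezero)$. Since by definition $\Upsilon$ measures the failure of the Chern connection on $\cH$ to preserve the subbundle $(K_\olB+[Z])^*$, its restriction to $\fibrezero$ records how the $r_W$--plane $\cG(\left.\Phione\right|_\fibrezero)(x)$ rotates inside $\bC^{d_W}$ as $x$ varies. Using the Maurer--Cartan expression \eqref{E:Psi1} to compare the Chern connection to the flat connection pulled back from $T_W$, this rotation is exactly $\td \cG(\left.\Phione\right|_\fibrezero)$.

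Granting the identification, the concluding step is immediate: $\left.\tau\right|_\fibrezero$ is the squared Hermitian norm of $\left.\Upsilon\right|_\fibrezero$, so $\left.c_1(K_\olB+[Z])\right|_\fibrezero$ is positive at $x$ if and only if $\left.\Upsilon\right|_{\fibrezero,x}$ has trivial kernel on $T_x\fibrezero$, if and only if $\td\cG(\left.\Phione\right|_\fibrezero)$ is injective at $x$. The main obstacle is the middle paragraph, specifically verifying in the Maurer--Cartan framework of \S\ref{S:Psi} that the Chern connection on $\cH$ restricted to $\fibrezero$ agrees, modulo the subbundle $(K_\olB+[Z])^*$, with the flat connection pulled back from translations on $T_W$. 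The constant rank hypothesis on \eqref{E:PhiW} enters crucially here: it is what guarantees both that the Gauss map has a well-defined target Grassmannian and that the torus directions cut out a genuine subbundle of $\tGr^{-1}_{\cFe}\tEnd(\cEe)$ along $\fibrezero$, so that the identification is well posed pointwise and globally.
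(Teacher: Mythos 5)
Your proposal follows the paper's own route: you first derive \eqref{E:tauon0} from \eqref{E:c1on0}, Lemma \ref{L:VIII.10} and \eqref{E:tau}, then identify $\left.\Upsilon\right|_\fibrezero$ with $\td\,\cG(\left.\Phione\right|_\fibrezero)$, and finally conclude positivity from the sum-of-squares expression \eqref{E:UU}. The paper carries out the middle identification via the projectivized section $[\eta_1]$ and the connection forms $(\theta^a_1)_{a\ge 2}$, which is the same idea as your ``rotation of the $r_W$--plane, Chern versus flat torus connection'' argument, and at essentially the same level of detail, so your proposal is correct.
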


\begin{proof}[Sketch of Proof]\label{sp:VIII.12b}
The lemma is proved in \S\ref{S:prf12b}.  Given \eqref{E:tauon0}, the essential content of the argument is that the restriction of $\Upsilon$ to the the fibre $\fibrezero$ may be identified with the differential of the Gauss map \eqref{E:Gauss}.
\end{proof}

\subsubsection{Proof of Theorem \ref{T:pg1ample}} \label{S:prfTpg1}

Local Torelli for $(\olB,Z;\Phi)$ implies generic local Torelli for $\Phi$ (Lemma \ref{L:ltc2}).  It follows from \S\ref{S:loctor0} and Lemma \ref{L:VIII.12a} that $K_{\olB} + [Z]$ is nef and big.

It remains to establish ampleness.  Following Remark \ref{R:eff}, we need to show that
\begin{equation}\label{E:ample}
  C \cdot (K_\olB + [Z]) \ > \ 0 \,,
\end{equation}
for every irreducible curve $C \subset \olB$.  There are two cases to consider.  \smallskip

\emph{Case 1:  The image $\Phizero(C)$ is a curve}.   Lemma \ref{L:VIII.12a} and \S\ref{S:loctor0} imply
\[
  C \cdot (K_\olB + [Z]) \ = \ \int_{C \cap B} c_1(K_\olB + [Z])
  \ \ge \ \epsilon \int_C c_1(\Le) \ > \ 0 \,,
\]
yielding the desired \eqref{E:ample}.

\smallskip

\emph{Case 2:   The image $\Phizero(C)$ is a point}.  This is the most interesting case.  We necessarily have $C \subset \fibrezero \subset Z$.  Then Lemma \ref{L:VIII.12b} yields
\[
  C \cdot (K_\olB + [Z]) \ = \ \int_C c_1(K_\olB + [Z])
  \ > \ 0 \,,
\]
establishing the desired \eqref{E:ample}.  \hfill\qed

\appendix

\section{Curvature in Hodge theory} \label{S:curvature}

\subsection{Tangent bundle}\label{S:TD}

Recall the Killing form $\kappa$ of $\fg$, \cite[\S B.1.2]{GGRhatPT}.  Define a Hermitian inner product $h_\varphi$ on $T_\varphi D \simeq\op_{p>0} \,\fg^{-p,p}_\varphi$ by $h_\varphi(x,y) = -\kappa( \varphi(\bi) x , \overline{y})$.  This Hermitian inner product is $K^0$--invariant, and so determines a $G_\bR$--invariant Hermitian metric $h$ on $TD$; that is, we have a homogeneous, Hermitian, holomorphic vector bundle
\[
  (TD , h) \ = \ G_\bR \times_{K^0} (T_\varphi D , h_\varphi ) \,.
\]  
The curvature 2-form $\Theta_D \in \cA^{1,1}(D,\fk_\bC^0)$ of this metric is of the form 
\begin{equation}\label{E:ThetaD1}
  \Theta_D \ = \ -\sum_{0>p\,\mathrm{odd}}A_p \wedge {}^t\overline{A_p}
  \ + \ \sum_{0>p\,\mathrm{even}} A_p \wedge {}^t\overline{A_p}
\end{equation}
for some matrices $A_p$ of holomorphic 1-forms with the property that $A_p(\varphi)$ vanishes on every $\fg^{q,-q}_{\varphi}$ with $p\not=q$ \cite[Theorem 4.13]{MR0259958}.  We say that $\Theta_D$ is the difference of disjoint positive $(1,1)$--forms.  We briefly review the construction of $A_p$.  Since these forms are homogeneous, it suffices to determine $A_p$ at the point $\varphi$.  The key observations that are applied below are that (i) the Hodge decomposition $\fg_\bC = \oplus\, \fg^{p,-p}_\varphi$ of \cite[(B.1)]{GGRhatPT} is polarized by $-\kappa$, and (ii) the identity
\[
  \kappa( [x,y] \,,\, z) \ = \ \kappa( x \,,\, [y,z]) \,,
\]
for all $x,y,z\in \fg_\bC$.
\begin{sblist} 
\item
We may choose a compact Cartan subalgebra $\ft_\bR \subset \fk^0_\bR$ of $\fg_\bR$.  Let $\sR \subset \ft^*_\bC$ denote the roots of $\fg_\bC$.  Given a root $\a \in \sR$, let $\fg_\a \subset \fg_\bC$, be the corresponding root space.  
If $x_\a \in \fg_\a$, then $\overline{x_\a} \in \fg_{-\a}$.  So we define $\bar\a = -\a$.  Fix root vectors $x_\a$ so that $\overline{x_\a} = x_{-\a}$.  We may scale the $x_\a$ so that $-\kappa(x_\a,x_{-\b}) = (-1)^{p_\a}\,\d_{\a\b}$, where $p_\a \in \bZ$ is defined by $\fg_\a \subset \fg^{p_\a,-p_\a}$.  
\item
Let $\vartheta \in \Omega^1(G_\bC,\fg_\bC)$ be the component of the left-invariant Maurer-Cartan form taking value in $\op_{p<0}\, \fg^{p,-p}_\varphi \simeq T_\varphi D$.  At the point $\varphi \in D$,
\[
  \Theta_D \ = \ -[\vartheta , \overline\vartheta]_{\fk^0_\bC}
  \ \in \ \tw^{1,1} (T_\varphi D \op \overline{T_\varphi D})^* 
  \,\ot\, \fk^0_\bC\,.
\]
\item
We may define $\vartheta^\a \in \Omega^1(G_\bC)$ by
\[
  \sum_{p_\a<0} \vartheta^\a\,x_\a \ = \ \vartheta \,.
\]
Employing the identification 
\[
  T_\varphi D \ = \ T_\varphi \check D \ = \ 
  \fg_\bC/\fp_\varphi \ \simeq \ \op_{p>0} \,\fg^{-p,p}_\varphi \,,
\]
the forms $\{ \vartheta^\a \ | \ p_\a < 0 \}$ are a basis of the holomorphic cotangent space $T^*_\varphi D$.  Likewise, $\{ \vartheta^{-\a} = \overline{\vartheta^\a} \ | \ \a \in \sR_p \,,\ p < 0 \}$ is a basis of the conjugate $\overline{T_\varphi D}{}^*$.
\item
Let $\sR_p \subset \sR$ denote those roots $\a$ with $p = p_\a$.  Write $\vartheta = \sum_{p<0} \vartheta_p$ with $\vartheta_p = \sum_{p_\a=p} \vartheta^\a\,x_\a$ the component of the Maurer--Cartan form taking value in $\fg^{p,-p}$.   Then
\begin{equation}\label{E:ThetaD2}
  \Theta_D \ = \ -\sum_{p<0}[\vartheta_p , \overline{\vartheta_p}] \,.
\end{equation}
\item
We have chosen root vectors $x_\a$ so that $-\kappa(x_\a , \overline{x_\b}) = (-1)^{p_\a} \d_{\a\b}$.  Fix a basis $\{ x_1,\ldots,x_r\}$ of $\ft_\bR$ so that $-\kappa(x_i,x_j) = -\kappa(x_i,\overline{x_j}) = \d_{ij}$, and set $p_i = 0$ and $\overline{i} =i$.  Then $\{ x_\mu \} = \{ x_\a \}_{\a\in\sR} \cup \{ x_i\}_{i=1}^r$ is a basis of $\fg_\bC$ satisfying $-\kappa(x_\mu , \overline{x_\nu}) = (-1)^{p_\mu}\d_{\mu\nu}$.  Then the matrix representation $M_\a = (M_{\a\mu}^\nu)$ of $\tad(x_\a):\fg_\bC \to \fg_\bC$ with respect to this basis is defined by $\tad(x_\a)x_\mu  = M_{\a \mu}^\nu x_\nu$, and satisfies ${}^t\overline{M_\a} = (-1)^{p_\a+1}M_{\overline\a}$.  (The conjugate transpose ${}^t\overline{M_\a}$ is defined with respect to $h_\varphi$.)  So if we identify $\vartheta_p$ with the matrix of one forms $A_p = \sum_{p_\a=p}\,\vartheta^\a \,M_\a$, then \eqref{E:ThetaD1} holds.
\end{sblist}

\subsection{Hodge bundles} \label{S:not-vb2}

Many of the vector bundles considered over $B$ are the pullbacks (under the period map $\Phi$) of homogeneous holomorphic vector bundles defined on $\check D \supset D$.  

\subsubsection{}
For example,  $\check D$ parameterizes filtrations $F^\tinyb$ of $V_\bC$, the trivial bundle $\check D \times V_\bC$ admits a canonical filtration
\[
  \bfF^n \ \subset \ldots \subset \ \bfF^1 \ \subset \bfF^0
\]
by homogeneous holomorphic vector bundles
\begin{equation}\label{E:bfF}
 \begin{tikzcd}[column sep = tiny,row sep=small]
  \bfF^p \arrow[d] \arrow[r,equal] & G_\bC \times_{P} F^p \\
  \check D \,.
\end{tikzcd}
\end{equation}
and $\cF^p = \Phi^*(\bfF^p)$.  (Here, $P \subset G_\bC$ is the stabilizer of a flag $F^\tinyb \in \check D$.  We may assume with out loss of generality that $F = \varphi \in D$.)  Likewise, the Hodge line bundle $\Lambda = \Phi^*(\bfL)$ with 
\[
  \bfL \ = \ \tdet(\bfF^n) \ot \tdet(\bfF^{n-1}) \ot \cdots \ot 
  \tdet(\bfF^{\lceil (n+1)/2 \rceil}) \,.
\]
Define
\[ \begin{tikzcd}[column sep = tiny,row sep=small]
  \bfE^p \arrow[r,equal] \arrow[d] &
  \bfF^p/\bfF^{p+1} \arrow[r,equal] & G_\bC \times_{P} (F^p/F^{p+1}) \\
  \check D \,.
\end{tikzcd} \]
Both $\left.\bfL\right|_D$ and $\left.\bfE^p\right|_D$ admits Hermitian metrics, via the identification (as smooth vector bundles)
\[ \begin{tikzcd}[column sep = tiny,row sep=small]
  \bfE^p \arrow[r,equal,"\sim"] &
  \bfV^{p,q} \arrow[d] \arrow[r,equal] & G_\bR \times_{K^0} V^{p,q}_\varphi \\
  & D \,.
\end{tikzcd} \]
By definition $h_\varphi(u,v) = Q(\varphi(\bi) u , \overline v)$ defines a Hermitian inner-product on the Hodge summand $V^{p,q}_\varphi$.  This inner-product is invariant under the action of $K^0 \subset \tAut(V^{p,q})$, and so determines a homogeneous Hermitian vector bundle
\[
  (\bfV^{p,q} , h ) \ = \ G_\bR \times_{K^0} ( V^{p,q}_\varphi , h_\varphi ) \,.
\]

\subsubsection{}

The curvature $\Theta_{\bfE^p}$ is also given by an expression similar to \eqref{E:ThetaD1}, \cite[(5.3)]{MR0282990}.  In fact, the curvature forms $\Theta_D$ and $\Theta_{\bfE^p}$ are even more closely related than this might suggest.  

In general, the bundles $\bfU \to D$ considered in Hodge theory are all of the following type: the are homogeneous, Hermitian vector bundles
\[ \begin{tikzcd}[column sep = tiny,row sep=small]
  (\bfU,h) \arrow[d] \arrow[r,equal] & G_\bR \times_{K^0} (U,h_\varphi) \\
  D\,,
\end{tikzcd} \]
with $U$ a $K^0$ submodule of some Hodge representation $G \to \tAut(\tilde U,Q)$, and a Hermitian inner product $h_\varphi$ induced by a polarization (on $\tilde U$ and then restricted to $U$).  They are also the restriction to $D$ of homogeneous, holomorphic vector bundles 
\[ \begin{tikzcd}[column sep = tiny,row sep=small]
  \bfU \arrow[d] \arrow[r,equal] & G_\bC \times_{P} U \\
  \check D\,.
\end{tikzcd} \]
In each of these cases the resulting curvature form is 
\begin{equation}\label{E:ThetaU}
  \Theta_\bfU \ = \ -[\vartheta,\overline{\vartheta}]_{\fu}
  \ = \ -\rho_U ( [\vartheta,\overline{\vartheta}]_{\fk_\bC^0} ) 
  \ = \ \rho_U(\Theta_D) \,,
\end{equation}
where $\fu$ is the image of the Lie algebra representation $\rho_U : \fk^0_\bC \to \tEnd(U)$.  That is, one may think of the various $\Theta_U$ as different matrix representations of the same underlying, endomorphism valued 2-form $\Theta_D$.  (Some care must be taken when $\rho_U$ is not faithful.)  

\subsection{Chern forms}

As \eqref{E:ThetaU} suggests, there is a certain sense in which the associated first Chern forms
\[
  c_1(\bfU,h) \ = \ 
  \tfrac{\bi}{2\pi}\,\mathrm{tr}\,\Theta_\bfU \ \in \ \cA^{1,1}_D
\]
are all the same; at least when the representation $\rho_U$ is faithful.  This is made precise in the following lemma.  Essentially they agree up to a constant when restricted to an irreducible invariant subbundle of $TD$.  The key point is to observe that the $c_1(\bfU,h)$ are all $G_\bR$--invariant.  So their value at an arbitrary $\tilde\varphi \in D$ is determined by their value at a fixed $\varphi \in D$.  So the issue is to show that they agree up to a constant when restricted to an irreducible $K^0$ submodule $\fv \subset T^\bR_\varphi D$ of the (real) tangent space at $\varphi$.  That restriction $\left.c_1(\bfU,h)\right|_\fv$ is $K^0$ invariant.  The same is true of the nondegenerate 
\[
  c_1(TD) \ = \ \tfrac{\bi}{2\pi} \mathrm{tr}\, \Theta_D  \,.
\]

\begin{lemma} \label{L:cherna}
Let $\fv \subset T^\bR_\varphi D$ be any irreducible $K^0$--submodule.  Then there exists a constant $\epsilon(U,\fv) \in \bR$ so that the restrictions satisfy
\[
  \left. c_1(\bfU,h) \right|_\fv \ = \ \epsilon(U,\fv)\,
  \left. c_1(TD) \right|_\fv \,.
\]
\end{lemma}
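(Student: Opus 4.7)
The plan is to exploit $G_\bR$--equivariance to reduce the claim to a representation-theoretic statement at the reference point $\varphi$, and then invoke Schur's lemma.  Since $\bfU$ and $TD$ are homogeneous Hermitian vector bundles on $D = G_\bR/K^0$, both curvature forms $\Theta_\bfU$ and $\Theta_D$, and hence the first Chern forms $c_1(\bfU,h)$ and $c_1(TD)$, are $G_\bR$--invariant.  Each form is therefore determined by its value at $\varphi$, which is a $K^0$--invariant real $(1,1)$--form on $T_\varphi D$---equivalently, a $K^0$--invariant Hermitian form on the complex tangent space.  This reduction is consistent with the expression $\Theta_\bfU = \rho_U(\Theta_D)$ in \eqref{E:ThetaU}, which presents both curvatures as matrix representations of a common $\fk^0_\bC$--valued 2-form.

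First I would restrict both Hermitian forms to $\fv$, producing two $K^0$--invariant Hermitian forms on the irreducible submodule.  Next I would apply Schur's lemma: the space of $K^0$--invariant Hermitian forms on an irreducible module is at most one dimensional, so the two restrictions must be proportional.  Finally I would use the nondegeneracy of $c_1(TD)$ recorded just above the lemma statement, together with the observation that distinct nonisomorphic $K^0$--isotypic components of $T_\varphi D$ are mutually orthogonal under any invariant Hermitian form, to conclude that $\left.c_1(TD)\right|_\fv \not\equiv 0$.  The proportionality from Schur's lemma then defines the unique real scalar $\epsilon(U,\fv)$ asserted by the lemma.

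The main obstacle is applying Schur's lemma carefully, since $\fv$ is described as a submodule of the \emph{real} tangent space $T^\bR_\varphi D$ rather than the complex one.  If $\fv \otimes \bC$ is $K^0_\bC$--irreducible the standard complex version applies directly.  If instead $\fv$ is of complex type with $\fv \otimes \bC \simeq \fv' \oplus \overline{\fv'}$, one must use that both Chern forms have pure type $(1,1)$---so they vanish on the $(2,0)$ and $(0,2)$ pieces---to cut the a priori two dimensional space of invariant Hermitian forms down to one dimension.  In either case the proportionality holds, but the bookkeeping requires making the decomposition of $T^\bR_\varphi D$ into holomorphic and antiholomorphic parts explicit and tracking how $J$--invariance interacts with $K^0$--invariance.
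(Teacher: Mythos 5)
Your proposal follows essentially the same route as the paper's proof: the paper likewise uses $G_\bR$--invariance to reduce both Chern forms to $K^0$--invariant forms at the fixed point $\varphi$, and then concludes by Schur's lemma together with the nondegeneracy of $c_1(TD)$, citing \cite[Ch.~13]{MR3727160} for the details. Your extra discussion of real versus complex type submodules and the $(1,1)$--type restriction is exactly the bookkeeping the paper delegates to that reference, so the two arguments coincide in substance.
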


\begin{proof}
This is a consequence of Schur's lemma and the nondegeneracy of $c_1(TD)$, \cite[Ch.~13]{MR3727160}.
\end{proof}

\begin{remark}
We note that when the weight $n=1,2$, then the horizontal subspace 
\[
  \bfI_\varphi \ = \fg^{-1,1}_\varphi \ \subset \ T_\varphi D
\]
is an irreducible $K^0$--module.  So in this case Lemma \ref{L:cherna} asserts that any $c_1(\bfU,h)$ arising naturally in Hodge theory is a multiple of the \Kahler~form when restricted to the horizontal subspace.
\end{remark}

\subsection{Curvature forms under the IPR} \label{S:not-vb3}

Over the compact dual, the horizontal (homogeneous holomorphic) sub-bundle is
\[ \begin{tikzcd}[column sep = tiny,row sep=small]
  \bfI \arrow[d] \arrow[r,equal] 
  & G_\bC \times_P (F^{-1}\fg_\bC/F^0\fg_\bC) \ \subset \ T\check D\\
  \check D .
\end{tikzcd} \]
When restricting to $D$ we may identify this with
\[ \begin{tikzcd}[column sep = tiny,row sep=small]
  \bfI \arrow[d] \arrow[r,equal] 
  & G_\bR \times_{K_0} (\fg_\varphi^{-1,1}) \ \subset \ T D\\
  D ,
\end{tikzcd} \]
and we have 
\begin{equation}\label{E:ThetaI}
  \left.\Theta_D\right|_{\bfI} \ = \ -A_1\wedge{}^t\overline{A_1}
  \ = \ -[\vartheta_1,\overline{\vartheta}_1] \,.
\end{equation}
This two form takes value in 
\[
  [\fg^{-1,1}_\varphi , \fg^{1,-1}_\varphi] \ \subset \ 
  \fg^{0,0}_\varphi \ = \ \fk_\bC^0 \,.
\]
The holomorphic sectional curvature of the period domain is negative and bounded away from zero in the horizontal directions \cite[Theorem 13.6.3]{MR3727160}.  Likewise, there exists $\epsilon > 0$ so that
\begin{equation}\label{E:trThetaD}
  \mathrm{tr}\,\Theta_D(v,\bar v)
  \ = \ -\mathrm{tr}\,\left(A_1(v)\,{}^t\overline{A_1(v)} \right)
  \ < \ -\epsilon\,h(v) 
  \,,\quad \forall \ v \in \left.\bfI\right|_D \,.
\end{equation}
Lemma \ref{L:cherna} implies there exist positive constants $\epsilon,\epsilon'$ so that
\begin{equation}\label{E:ThetaDw}
  -\epsilon\,\left.c_1(TD)\right|_{\bfI} \ \le \ 
  \left.c_1(\bfL)\right|_{\bfI} \ \le \ 
  -\epsilon'\,\left.c_1(TD)\right|_{\bfI} \,. 
\end{equation}

\subsubsection{Proof of Lemma \ref{L:VIII.10}} \label{S:prf10}

The key observation is that the restriction
\[
  \left.\tGr_{\cFe}^{-1}\tEnd(\cEe)\right|_B \ \simeq \ 
  \Phi^*(\bfI) \,.
\]
Set
\[
  \bfH \ = \ \tw^{\tdim\,B}\,\bfI \,,
\]
so that 
\begin{equation}\label{E:H2}
  \cH \ = \ \Phi^*(\bfH) \,.
\end{equation}
Since $\Theta_D$ and $\Theta_\bfH$ are homogeneous, Schur's lemma implies there exist positive constants $\epsilon_1, \epsilon_2$ so that 
\begin{equation}\label{E:c1c2}
  \epsilon_1\,\mathrm{tr}\,\left.\Theta_D\right|_\bfI
  \ \le \ 
  \mathrm{tr}\,\left.\Theta_\bfH\right|_\bfI
  \ \le \ 
  \epsilon_2\,\mathrm{tr}\,\left.\Theta_D\right|_\bfI \,.
\end{equation}
The lemma now follows from \eqref{E:ThetaDw} and \eqref{E:H2}.
\hfill\qed

\begin{remark}\label{R:chern}
The inequalities \eqref{E:c1c2} may also be deduced from Lemma \ref{L:cherna}.  Indeed, one may show that there exist positive constants $\epsilon,\epsilon'$ so that the the Chern forms satisfy 
\[
\begin{array}{rcccl}
  \epsilon \,\left.c_1(\bfL) \right|_\bfI 
  & \le & 
  -\left.c_1(TD) \right|_\bfI 
  & \le &
  \epsilon' \,\left.c_1(\bfL) \right|_\bfI  \,,\\
  \epsilon \,\left.c_1(\bfL) \right|_\bfI 
  & \le &
  -\left.c_1(\bfH) \right|_\bfI 
  & \le &
  \epsilon '\,\left.c_1(\bfL) \right|_\bfI  \,.
\end{array}
\]
\end{remark} 

\subsubsection{Proof of Lemma \ref{L:VIII.12a}} \label{S:prf12a}

The outline of the proof is sketched on page \pageref{sp:VIII.12a}; here we verify the details.

Fix a smooth local framing $\{\eta_j\}$ of 
\[ 
  \cH \ = \ \Phi^*(\bfH)
\]
over $B$ so that $\eta_1$ spans $(K_\olB+[Z])^* = \tw^{\tdim\,B}T_\olB(-\log Z)$, and is orthogonal to the $\{\eta_a\}_{a\ge2}$ with respect to the Hermitian form.  Let $\nabla \eta_j = \theta^i_j\ot\eta_i$ denote the local connection 1-forms, and 
\[
  \Theta_\cH \ = \ \Theta^i_j\,\eta_i\ot\eta^j
\]
the local curvature $(1,1)$--forms.  Then 
\[
  \left.\Theta_\cH\right|_{(K_\olB + [Z])^*} \ = \ \Theta^1_1 
  \tand
  \Theta_{(K_\olB + [Z])^*} \ = \ \td\theta^1_1 \,,
\]
are related by 
\[
  \Theta^1_1 \ = \ \td \theta^1_1 \ + \ \theta^1_j \wedge \theta^j_1 \,.
\]
The fact that $\eta_1$ is $h$--orthogonal to $\{\eta_a\}_{a\ge 2}$ implies that \[
  h_{11}\,\theta^1_a \ + \ h_{ab}\,\overline{\theta^b_1} \ = \ 0 \,,
\]
with $h_{11} = h(\eta_1,\eta_1)$ and $h_{ab} = h(\eta_a,\eta_b)$.  Since $(h_{ab})_{a,b\ge2}$ is positive definite, we see that 
\[
  \frac{\bi}{2\pi}\,\theta^1_j \wedge \theta^j_1 \ = \ 
  \frac{\bi h_{ab}}{2\pi h_{11}}\,
  \theta^a_1 \wedge \overline{\theta^b_1}
\]
is a non-negative $(1,1)$--form.  Setting
\begin{equation}\label{E:UU}
  (\Upsilon,\Upsilon) \ = \ 
  \frac{1}{h_{11}} h_{ab} \,\theta^a_1 \wedge \overline{\theta^b_1}
\end{equation}
yields \eqref{E:Up}. 

\begin{remark} \label{R:Up}
The endomorphism valued $(1,0)$--form 
\[
  \Upsilon \ = \ \theta^b_1 \,\eta_b \ot \eta^1
\]
is the second fundamental form \eqref{E:dfnUp} of $(K_\olB+[Z])^* \subset \cH$, \cite[\S4]{MR0282990}.
\end{remark}

It remains to show that $-\tfrac{\bi}{2\pi}h(\Theta\cdot\eta_1,\eta_1)$ is positive.  This is a consequence of the structure \eqref{E:ThetaU} and \eqref{E:ThetaI} of the curvature, and the IPR.  Given $b \in B$, the injectivity of the differential $\td\Phi$ and the IPR allow us to identify identify $T_bB$ with an abelian subalgebra $\fb \subset \fg^{-1,1}_\varphi$, $\tdim\,\fb = \tdim\,B$.  So for the purpose of this algebraic computation we may work point-wise and identify $\Theta_\cH$ with $\Theta = -[\vartheta_1,\overline\vartheta_1]$, with $\vartheta_1$ taking value $\fb$, and $\eta_1$ with an element of the line $\tw^{\tdim\,B}\fb \subset \tw^{\tdim\,B}\fg^{-1,1}_\varphi$.  The adjoint action $\tad:\fg_\bC \to \tEnd(\fg_\bC)$ induces an action of $\fg_\bC$ on $\tw^{\tdim\,B}\fg_\bC$.  The fact that $\fb$ is abelian implies $\vartheta_1\cdot \eta_1 = 0$, so that $[\vartheta_1,\overline\vartheta_1] \cdot \eta_1 = \vartheta_1 \overline\vartheta_1 \cdot \eta_1$ and 
\begin{eqnarray*}
  h(\Theta\cdot\eta_1,\eta_1) & = & 
  -h\left([\vartheta_1,\overline\vartheta_1]\cdot\eta_1 \,,\, \eta_1 \right)\\
  & = & 
  -h \left(\vartheta_1\,\overline\vartheta_1 \cdot\eta_1 \,,\, \eta_1 \right)
  \\ & = & 
  -h(\overline\vartheta_1 \cdot\eta_1\,,\,\overline\vartheta_1 \cdot\eta_1) \,.
\end{eqnarray*}
The final equality is the due to the fact that the $\overline\vartheta_1$ is the $h$--conjugate transpose of $\vartheta_1$, \cite[Corollary 12.6.3]{MR3727160}.

To see that $\overline\vartheta_1 \cdot \eta_1 \in \tw^{\tdim\,B}\fg_\bC$ is nonzero, recall that $\vartheta_1$ is nonzero (the Torelli hypothesis) and takes value in $\fb$.  We can complete $\fb$ to a basis $\{\vartheta_1,\xi_2,\ldots,\xi_d\}$ so that $\eta_1 = \vartheta_1\wedge \xi_2\wedge \cdots \wedge \xi_d$.  Keeping in mind that $[\overline\vartheta_1,\vartheta_1]$ is nonzero \cite[Corollary 12.6.3]{MR3727160}, we see that
\[
  \overline\vartheta_1 \cdot \eta_1 \ = \ 
  [\overline\vartheta_1,\vartheta_1] \wedge \xi_2 \wedge \cdots \wedge \xi_d
  \ + \ \sum_{j=2}^d \vartheta_1 \wedge \xi_2 \wedge\cdots
  \wedge [\overline\vartheta_1 , \xi_j ] \wedge \cdots \wedge \xi_d
\]
is nonzero.  This establishes the positivity of $h(\Theta\cdot\eta_1,\eta_1)$.

It now follows from the local Torelli assumption (\S\ref{S:loctor0} and Lemma \ref{L:ltc2}) that there exists $\epsilon > 0$ so that $\epsilon\,c_1(\Le) \le -\frac{\bi}{2\pi} h(\Theta\cdot\eta_1,\eta_1)$.  A priori this $\epsilon$ depends on our choice of $b \in B$.  However, homogeneity under the action of $G_\bR$ and the fact that the grassmannian $\tGr(\tdim\,B,\fg^{-1,1}_\varphi) \ni \fb$ is compact imply that we can find $\epsilon>0$ that works for all $b\in B$.  \hfill\qed

\subsubsection{Proof of Lemma \ref{L:VIII.12b}} \label{S:prf12b}

As indicated in the sketch of the proof (page \pageref{sp:VIII.12b}) it suffices to show that $\left.\Upsilon\right|_\fibrezero$ may be identified with the differential of the Gauss map $\cG(\left.\Phione\right|_\fibrezero)$.  To see this recall the set-up of \S\ref{S:prf12a}.  The local section $\eta_1$ of $(K_\olB+[Z])^*$ defines (globally) a map
\[
  [\eta_1] : \olB \ \to \ \bP(\tw^{\tdim\,B}T_\olB(-\log Z)) \,.
\]
The derivative of $[\eta_1]$ is 
\begin{eqnarray*}
  \td [\eta_1] &  = & \nabla \eta_1 \quad\hbox{mod} \quad (K_\olB+[Z])^*\\
  & = & \sum_{a\ge2} \theta^a_1\,\eta_a \quad\hbox{mod} \quad (K_\olB+[Z])^*
\end{eqnarray*}
and may be identified with $\Upsilon$ (defined in Remark \ref{R:Up}).  We may then identify the differential $\td \cG(\left.\Phione\right|_\fibrezero)$ with the restriction of $(\theta^a_1)_{a\ge 2}$ to $\fibrezero$.  It then follows from \eqref{E:UU} that $\left.\tau\right|_\fibrezero$ is positive if and only if $\td \cG(\left.\Phione\right|_\fibrezero)$ is injective. \hfill \qed

\begin{remark}
It follows from \cite[Remark 5.1]{GGRhatPT} and Lemma \ref{L:ltc2} that $[\eta_1]$ may be thought of as a Gauss map for $\PhiT$.  
\end{remark}

\def\cprime{$'$} \def\Dbar{\leavevmode\lower.6ex\hbox to 0pt{\hskip-.23ex
  \accent"16\hss}D}

\end{document}